\documentclass[12pt]{amsart}
\usepackage[utf8]{inputenc}
\usepackage{amsfonts}
\usepackage{latexsym}
\usepackage{amssymb}
\usepackage{amsmath}
\usepackage{color}
\usepackage{tikz}
\usepackage{enumerate}
\usepackage{mathrsfs}
\usepackage{todonotes}
\usepackage{hyperref}

\usepackage[left=2.5cm, top=2.5cm,bottom=2.5cm,right=2.5cm]{geometry}

%%%%%%%%%%%%%%%%%%%%%%%%%%%%%%%%%%%%%%%%%%%%%%t

\newcommand{\R}{\mathbb R}
\newcommand{\N}{\mathbb N}

\newcommand{\E}{\mathbb E}
\newcommand{\Q}{\mathbb Q}
\newcommand{\Z}{\mathbb Z}

\newcommand{\Pro}{\mathbb P}

\newcommand{\Var}{\mathrm{Var}}

\newcommand{\Pc}{\mathcal{P}}

\def\dint{\textup{d}}

\newcommand{\ind}{{\small 1}\!\!1}
\newcommand{\sign}{\text{sign}}
\newcommand{\frm}{finitely measurable}
\newcommand{\sod}{sum-of-digits function}

%%%%%%%%%%%%%%%%%%%%%%%%%

\DeclareMathOperator{\id}{id}

  %Realteil
  %Imaginärteil

%%%%%%%%%%%%%%%%%%%%%%%%%

\newtheorem{thm}{Theorem}[section]

\newtheorem{cor}[thm]{Corollary}
\newtheorem{lemma}[thm]{Lemma}
\newtheorem{df}[thm]{Definition}
\newtheorem{proposition}[thm]{Proposition}

{
\theoremstyle{definition}
\newtheorem{example}[thm]{Example}
\newtheorem{rem}[thm]{Remark}

}
% % % % % % % % % % % % % % %boldface

\definecolor{gurot}{RGB}{180,20,20}
\definecolor{jorot}{RGB}{220,20,20}

% % % % % % % % % % % % % % % % % % % % % % % % %Nomenclatur
\usepackage{nomencl}
\makenomenclature
% % % % % % % % % % % % % % % % % % % % % % % % %

%\allowdisplaybreaks

\setlength{\parindent}{0em}
%%%%%%%%%%%%%%%%%%%%%%%%%%%%%%%%%%%%%%%%%%%%%

\begin{document}

%%%%%%%%%%%%%%%%%%%%%%%%%%%%%%%%%%%%%%%%%%%%%5

\title[]{Independence in Mathematics -- the key \\to a Gaussian law}

\author[G. Leobacher]{Gunther Leobacher}
\address{Gunther Leobacher: Institute of Mathematics \& Scientific Computing, University of Graz, Austria.}
\email{gunther.leobacher@uni-graz.at}

\author[J. Prochno]{Joscha Prochno}
\address{Joscha Prochno: Institute of Mathematics \& Scientific Computing, University of Graz, Austria.} \email{joscha.prochno@uni-graz.at}

\keywords{Central limit theorem, Gaussian law, independence, relative measure, lacunary series}
\subjclass[2010]{Primary: 60F05, 60G50 Secondary: 42A55, 42A61}

%\thanks{???}

%\date{\today}

\begin{abstract}
In this manuscript we discuss the notion of (statistical) independence embedded
in its historical context. We focus in particular on its appearance and role in
number theory, concomitantly exploring the intimate connection of independence
and the famous Gaussian law of errors. As we shall see, this at times requires
us to go adrift from the celebrated Kolmogorov axioms, which give the
appearance of being ultimate ever since they have been introduced in the
$1930$s. While these insights are known to many a mathematician, we feel it is
time for both a reminder and renewed awareness. Among other things, we present the independence of the coefficients in a binary expansion together with a central limit theorem for the \sod{} as well as the independence of divisibility by
primes and the resulting, famous central limit theorem of Paul Erd\H{o}s and
Mark Kac on the number of different prime factors of a number $n\in\N$. We
shall also present some of the (modern) developments in the framework of
lacunary series that have its origin in a work of Rapha\"el Salem and Antoni
Zygmund.
\end{abstract}

\maketitle

%\tableofcontents

%%%%%%%%%%%%%%%%%%% % % % % % % % % % % % % % % % %
%%%%%%%%%%%%%%%%%%% % % % % % % % % % % % % % % % %
\section{Introduction}
%%%%%%%%%%%%%%%%%%% % % % % % % % % % % % % % % % %
%%%%%%%%%%%%%%%%%%% % % % % % % % % % % % % % % % %

One of the most famous graphs, not only among mathematicians and scientists, is the probability density function of the (standard) normal distribution (see Figure~\ref{fig:glockenkurve}), which has adorned the $10$ Mark note of the former German currency for many years. Although already taking a central role in a work of Abraham de
Moivre (26. May 1667 in Vitry-le-Francois; 27. November 1754 in London) from
$1718$, this curve only earned its enduring fame through the work of famous German
mathematician Carl Friedrich Gau{\ss} (30. April 1777 in Braunschweig; 23.
February 1855 in Göttingen), who used it in the approximation of orbits by
ellipsoids when developing the least squares method, nowadays a standard
approach in regression analysis. More precisely, Gau{\ss} conceived this method
to master the random errors, i.e., those which fluctuate due to the
unpredictability or uncertainty inherent in the measuring process, that occur
when one tries to measure orbits of celestial bodies. The strength of this
method became apparent when he used it to predict the future location of the
newly discovered asteroid Ceres. Ever since, this curve seems to be the key to
the mysterious world of chance and still the myth holds on that wherever this
curve appears, randomness is at play.

With this article we seek to address mathematicians as well as a mathematically educated audience alike. One can say that the goal of this manuscript is $3$-fold. First, for those less familiar with it we want to undo the fetters that connect chance and the Gaussian curve so onesidedly. Second, we want to recall the deep and intimate connection of the notion of statistical independence and the Gaussian law of errors beyond classical probability theory, which, thirdly, demonstrates that occasionally one is obliged to step aside from its seemingly ultimate form in terms of the Kolmogorov axioms and work with notions having its roots in earlier foundations of probability theory.

To achieve this goal we shall, partially embedded in a historic context,
present and discuss several results from mathematics where, once an appropriate
form of statistical independence has been established, the Gaussian curve
emerges naturally. In more modern language this means that central limit
theorems describe the fluctuations of mathematical quantities in different
contexts. Our focus shall be on results that nowadays are considered to be part
of probabilistic number theory. At the very heart of this development lies the
true comprehension and appreciation of independence by Polish mathematician
Mark Kac (3. August 1914 in Kremenez; 26. October 1984 in California). His
pioneering works and insights, especially his collaboration with Hugo Steinhaus
(14. January 1887 in Jas{\l}o; 25.  February 1972 in
Wroc{\l}aw) and famous mathematician Paul Erd\H{o}s (26. March 1913 in
Budapest; 20. September 1996 in Warsaw), have revolutionized our understanding
and formed the development of probabilistic number theory for many years with
lasting influence.

\begin{center}

%\includegraphics[width=12cm]{gauss_kurve}

%% Creator: Inkscape inkscape 0.91, www.inkscape.org
%% PDF/EPS/PS + LaTeX output extension by Johan Engelen, 2010
%% Accompanies image file '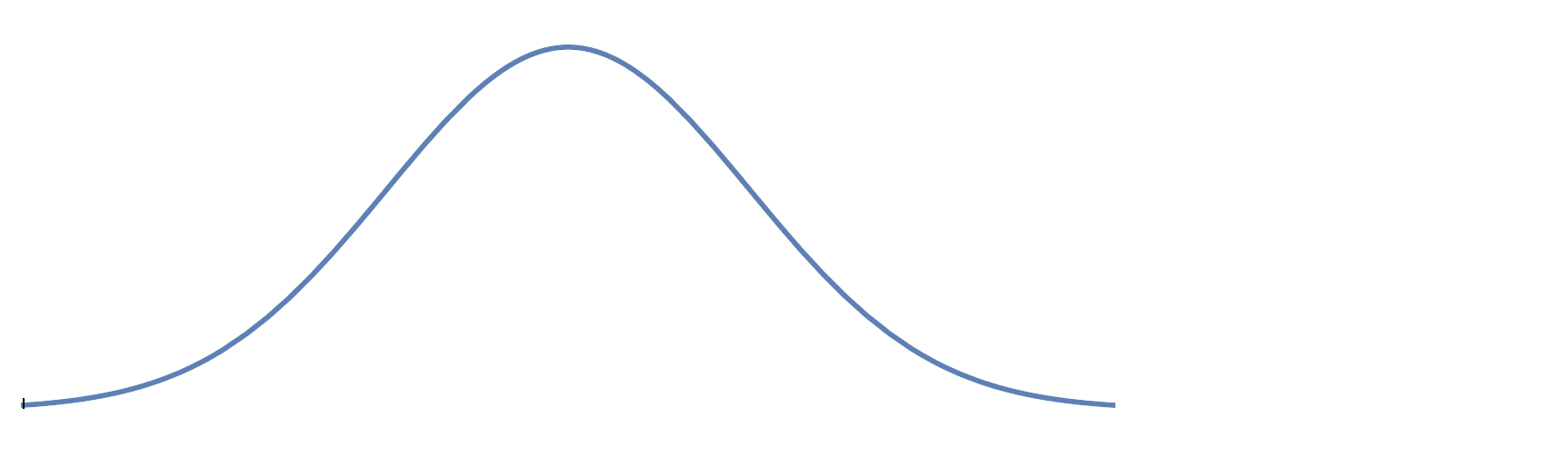' (pdf, eps, ps)
%%
%% To include the image in your LaTeX document, write
%%   \input{<filename>.pdf_tex}
%%  instead of
%%   \includegraphics{<filename>.pdf}
%% To scale the image, write
%%   \def\svgwidth{<desired width>}
%%   \input{<filename>.pdf_tex}
%%  instead of
%%   \includegraphics[width=<desired width>]{<filename>.pdf}
%%
%% Images with a different path to the parent latex file can
%% be accessed with the `import' package (which may need to be
%% installed) using
%%   \usepackage{import}
%% in the preamble, and then including the image with
%%   \import{<path to file>}{<filename>.pdf_tex}
%% Alternatively, one can specify
%%   \graphicspath{{<path to file>/}}
%% 
%% For more information, please see info/svg-inkscape on CTAN:
%%   http://tug.ctan.org/tex-archive/info/svg-inkscape
%%
\begingroup%
  \makeatletter%
  \providecommand\color[2][]{%
    \errmessage{(Inkscape) Color is used for the text in Inkscape, but the package 'color.sty' is not loaded}%
    \renewcommand\color[2][]{}%
  }%
  \providecommand\transparent[1]{%
    \errmessage{(Inkscape) Transparency is used (non-zero) for the text in Inkscape, but the package 'transparent.sty' is not loaded}%
    \renewcommand\transparent[1]{}%
  }%
  \providecommand\rotatebox[2]{#2}%
  \ifx\svgwidth\undefined%
    \setlength{\unitlength}{496.7608887bp}%
    \ifx\svgscale\undefined%
      \relax%
    \else%
      \setlength{\unitlength}{\unitlength * \real{\svgscale}}%
    \fi%
  \else%
    \setlength{\unitlength}{\svgwidth}%
  \fi%
  \global\let\svgwidth\undefined%
  \global\let\svgscale\undefined%
  \makeatother%
  \begin{picture}(1,0.29195133)%
    \put(0.1,0){\includegraphics[width=\unitlength,page=1]{gausskurve.pdf}}%
    \put(0.10469037,0.00748851){\color[rgb]{0,0,0}\makebox(0,0)[lb]{\smash{-}}}%
    \put(0.1152528,0.00748851){\color[rgb]{0,0,0}\makebox(0,0)[lb]{\smash{3}}}%
    \put(0.1,0){\includegraphics[width=\unitlength,page=2]{gausskurve.pdf}}%
    \put(0.22048049,0.00748851){\color[rgb]{0,0,0}\makebox(0,0)[lb]{\smash{-}}}%
    \put(0.23104292,0.00748851){\color[rgb]{0,0,0}\makebox(0,0)[lb]{\smash{2}}}%
    \put(0.1,0){\includegraphics[width=\unitlength,page=3]{gausskurve.pdf}}%
    \put(0.33627061,0.00748851){\color[rgb]{0,0,0}\makebox(0,0)[lb]{\smash{-}}}%
    \put(0.34683304,0.00748851){\color[rgb]{0,0,0}\makebox(0,0)[lb]{\smash{1}}}%
    \put(0.1,0){\includegraphics[width=\unitlength,page=4]{gausskurve.pdf}}%
    \put(0.57313507,0.00748851){\color[rgb]{0,0,0}\makebox(0,0)[lb]{\smash{1}}}%
    \put(0.1,0){\includegraphics[width=\unitlength,page=5]{gausskurve.pdf}}%
    \put(0.68892519,0.00748851){\color[rgb]{0,0,0}\makebox(0,0)[lb]{\smash{2}}}%
    \put(0.1,0){\includegraphics[width=\unitlength,page=6]{gausskurve.pdf}}%
    \put(0.80471532,0.00748851){\color[rgb]{0,0,0}\makebox(0,0)[lb]{\smash{3}}}%
    \put(0.1,0){\includegraphics[width=\unitlength,page=7]{gausskurve.pdf}}%
    \put(0.43117543,0.0823686){\color[rgb]{0,0,0}\makebox(0,0)[lb]{\smash{0.1}}}%
    \put(0.1,0){\includegraphics[width=\unitlength,page=8]{gausskurve.pdf}}%
    \put(0.43117543,0.14026366){\color[rgb]{0,0,0}\makebox(0,0)[lb]{\smash{0.2}}}%
    \put(0.1,0){\includegraphics[width=\unitlength,page=9]{gausskurve.pdf}}%
    \put(0.43117543,0.19815872){\color[rgb]{0,0,0}\makebox(0,0)[lb]{\smash{0.3}}}%
    \put(0.1,0){\includegraphics[width=\unitlength,page=10]{gausskurve.pdf}}%
    \put(0.433117543,0.25605377){\color[rgb]{0,0,0}\makebox(0,0)[lb]{\smash{0.4}}}%
    \put(0.1,0){\includegraphics[width=\unitlength,page=11]{gausskurve.pdf}}%
    \put(0.5881352,0.23082565){\color[rgb]{0,0,0}\makebox(0,0)[lb]{\smash{\textbf{$x\mapsto \frac{e^{-\frac{x^2}{2}}}{\sqrt{2\pi}}$}}}}%
  \end{picture}%
\endgroup%

\label{fig:glockenkurve}

Figure~\ref{fig:glockenkurve}. The Gaussian curve.
\end{center}

% % % % % % % % % % % % % % % % % % % % % % % % % % % % % % % % % % % % % % % %
\section{The classical central limit theorems and independence -- a refresher}
% % % % % % % % % % % % % % % % % % % % % % % % % % % % % % % % % % % % % % % %

In this section we start with two fundamental results of probability theory and the notion of independence. These considerations form the starting point for future deliberations.

% % % % % % % % % % % % % % % % % % % % % % % % % % % % % % % % % % %
\subsection{The notion of independence}
% % % % % % % % % % % % % % % % % % % % % % % % % % % % % % % % % % %

Independence is one of the central notions in probability theory. It is hard to imagine today that this, for us so seemingly elementary and simple concept, has only been used vaguely and intuitively for hundreds of years without a formal definition underlying this notion. Implicitly this concept can be traced back to the works of Jakob Bernoulli (6. January 1655 in Basel; 16. August 1705 in Basel) and evolved in the capable hands of Abraham de Moivre. In his famous oeuvre ``The Doctrine of Chances'' \cite{Moivre1718} he wrote:
\vskip 2mm
``\emph{...if a Fraction expresses the Probability of an Event, and another Fraction the Probability of another Event, and those two Events are independent; the Probability that both those Events will Happen, will be the Product of those Fractions.}'' 
\vskip 2mm
It is to be noted that, even though this definition matches the modern one,
neither the notion ``Probability'' nor ``Event'' had been introduced in an
axiomatic way. It seems that the first formal definition of independence goes
back to the year $1900$ and the work \cite{Bohlmann1900} of German
mathematician Georg Bohlmann  (23. April 1869 in Berlin; 25. April 1928 in
Berlin)\footnote{It was decades later that Hugo Steinhaus and Mark Kac
rediscovered this concept independently of the other mathematicians
\cite{KacSteinhaus1938}. They were unaware of the previous works.}. In fact,
long before Andrei Nikolajewitsch Kolmogorov (25. April 1903 in Tambow; 20.
October 1987 in Moscow) proposed his axioms that today form the foundation of
probability theory, Bohlmann had presented an axiomatization --- but without asking
for $\sigma$-additivity. For a detailed exposition of the historical
development and the work of Bohlmann, we refer the reader to an article of
Ulrich Krengel \cite{Krengel2011}.

We continue with the formal definition of independence as it is used today. Let $(\Omega,\mathcal A, \Pro)$ be a probability space consisting of a non-empty set $\Omega$ (the sample space), a $\sigma$-Algebra (the set of events) on $\Omega$, and a probability measure $\Pro: \mathcal A \to [0,1]$. We then say that two events $A,B\in\mathcal A$ are \emph{(statistically) independent} if and only if
\[
\Pro[A\cap B] = \Pro[A]\cdot \Pro[B]\,.
\]
In other words two events are independent if their joint probability equals the product of their probabilities. This naturally extends to any sequence $A_1,A_2,\dots$ of events, which are said to be independent if and only if for every $n\in\N$ and all subsets $I\subseteq \N$ of cardinality $n$,
\[
\Pro\Big[\bigcap_{i\in I}A_i\Big] = \prod_{i\in I}\Pro[A_i]\,.
\]
It is important to note that in this case we ask for much more than just pairwise independence and, consequently, also have to verify much more. The number of conditions to be verified to show that $n$ given events are independent is exactly
\[
{n\choose2} + {n\choose 3} + \dots + {n\choose n} = 2^n - (n+1). 
\]
Having this notion of independence at hand, we define independent
random variables. If $X:\Omega\to \R$ and $Y:\Omega\to \R$ are two random
variables, then we say they are independent if and only if for all measurable
subsets $A,B\subseteq \R$, \[
\Pro[X\in A, Y\in B] = \Pro[X\in A]\cdot \Pro[Y\in B]\,.
\footnote{We use the standard notation $\{X\in A\}$ for $\{\omega\in\Omega\,:\, X(\omega)\in A\}$,  $\Pro[X\in A]$ for $\Pro[\{X\in A\}]$, and $\Pro[X\in A, Y\in B]$ for $\Pro[\{X\in A\}\cap\{Y\in B\}]$.}
\]
This means that the random variables $X$ and $Y$ are independent if and only if for all measurable subsets $A,B\subseteq \R$ the events $\{X\in A \}\in \mathcal A$ and $\{Y\in B\}\in\mathcal A$ are independent. Again, a sequence $X_1, X_2,\dots:\Omega\to\R$ of random variables is said to be independent if and only if for every $n\in\N$, any subset $I\subseteq \N$ of cardinality $n$, and all measurable sets $A_i\subseteq \R$, $i\in I$,
\[
\Pro\bigg[ \bigcap_{i\in I}\{X_i \in A_i \}\bigg] = \prod_{i\in I} \Pro[X_i\in A_i]\,.
\]

%\gu{Vielleicht hier: Finally, any (finite of infinite) collection of 
%random events/variables is called independent, if and only if every
%finite subset is independent. Dann die Fußnote im GWS von de Moivre löschen.}

% % % % % % % % % % % % % % % % % % % % % % % % % % % % % % % % % % % % % % %
\subsection{The central limit theorems of de Moivre-Laplace and Lindeberg}
% % % % % % % % % % % % % % % % % % % % % % % % % % % % % % % % % % % % % % %

The history of the central limit theorem starts with the work of French mathematician Abraham de Moivre, who, around the year $1730$, proved a central limit theorem for standardized sums of independent random variables following a symmetric Bernoulli distribution \cite{Moivre2edition}.\footnote{A random variable $X$ is Bernoulli distributed if and only if $\Pro(X=0)+\Pro(X=1)=1$. Here $p=\Pro(X=1)$ is the parameter of the Bernoulli distribution and in the case where $p=\frac{1}{2}$, we call the distribution ``symmetric''. In his paper de Moivre did not call them Bernoulli random variables, but spoke of the probability distribution of the number of heads in coin toss.} It was not before $1812$ that Pierre-Simon Laplace (28. March 1749 in Beaumont-en-Auge; 5. March 1827 in Paris) generalized this result to the asymmetric case \cite{Laplace1812}.
However, a central limit theorem for standardized sums of independent random variables together with a rigorous proof only appeared much later in a work of Russian mathematician Alexander Michailowitsch Ljapunov (06. June 1857 in Jaroslawl; 03. November 1918 in Odessa) from 1901 \cite{Ljapunov1901}. Jarl Waldemar Lindeberg (04. August 1876 in Helsinki; 12. December 1932 Helsinki) published his works on the central limit theorem, in which he developed his famous and ingenious method of proof (today known as Lindeberg method), in 1922 \cite{L1922a,L1922b}. While in a certain sense elementary, this technique can be applied in various ways. A very nice exposition on Lindeberg's method can be found in the survey article \cite{EL2014} of Peter Eichelsbacher and Matthias L\"owe. For an exhaustive presentation on the history of the central limit theorem we warmly recommend the monograph of Hans Fischer \cite{Fischer2011}.

Let us start with the classical central limit theorem of de Moivre, hence restricting ourselves to the symmetric case $p=\frac{1}{2}$ in the Bernoulli distribution. 

\begin{thm}[De Moivre, 1730]\label{clt:de moivre}
Let $X_1,X_2,X_3,\dots$ be a sequence of independent random variables with a symmetric Bernoulli distribution. Then, for all $a,b\in\R$ with $a<b$, we have
\[
\lim_{n\to\infty} \Pro\bigg[a \leq \frac{\sum_{k=1}^nX_k - \frac{n}{2}}{\sqrt{\frac{n}{4}}} \leq b\bigg] = \frac{1}{\sqrt{2\pi}}\int_a^b e^{-\frac{x^2}{2}}\,\dint x\,.
\]
\end{thm}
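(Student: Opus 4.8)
The plan is to prove a \emph{local} limit theorem and then sum it up. Since the $X_k$ are independent and symmetric Bernoulli, the partial sum $S_n := \sum_{k=1}^n X_k$ counts the number of ones and is therefore binomially distributed, with
\[
\Pro[S_n = k] = \binom{n}{k} 2^{-n}, \qquad k = 0,1,\dots,n,
\]
of mean $n/2$ and variance $n/4$. Writing $Z_n := (S_n - \tfrac n2)/\sqrt{n/4}$, the event $\{a \le Z_n \le b\}$ is exactly the event that $k$ lies in the window $\tfrac n2 + \tfrac{\sqrt n}{2} a \le k \le \tfrac n2 + \tfrac{\sqrt n}{2} b$. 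So everything reduces to understanding the individual point masses $\binom nk 2^{-n}$ for $k$ in this range and summing them.

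The heart of the argument is an asymptotic expansion of these point masses by means of Stirling's formula $m! = \sqrt{2\pi m}\,(m/e)^m(1+o(1))$. I would parametrise the window by $k = \tfrac n2 + \tfrac{\sqrt n}{2} x$, so that $x = x_k$ is precisely the standardized value associated with $k$. Applying Stirling to each of $n!$, $k!$ and $(n-k)!$ and taking logarithms, one expands $\log\binom nk - n\log 2$ in powers of $x/\sqrt n$ using $\log(1\pm u) = \pm u - \tfrac{u^2}{2} \pm \cdots$; the constant and linear contributions cancel, the quadratic term produces $-x^2/2$, and the prefactors combine to $\tfrac{2}{\sqrt{2\pi n}}$. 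The outcome is the local estimate
\[
\Pro[S_n = k] = \frac{2}{\sqrt{2\pi n}}\, e^{-x_k^2/2}\,(1 + o(1)),
\]
valid uniformly for $x_k$ in the bounded interval $[a,b]$.

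It remains to sum this over the window. Consecutive admissible values of $k$ differ by $1$, which corresponds to a spacing $\Delta x = 2/\sqrt n$ between consecutive $x_k$; this is exactly the prefactor appearing above. Hence
\[
\Pro[a \le Z_n \le b] = \sum_{k} \Pro[S_n = k] = \frac{1}{\sqrt{2\pi}} \sum_{k} e^{-x_k^2/2}\,\Delta x\,(1+o(1)),
\]
and the sum on the right is a Riemann sum for $\tfrac{1}{\sqrt{2\pi}}\int_a^b e^{-x^2/2}\dint x$ with mesh tending to $0$, which yields the claimed limit.

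The main obstacle is the uniformity in the second step: because the substitution $k = \tfrac n2 + \tfrac{\sqrt n}{2} x$ lets the deviation $k - \tfrac n2$ grow like $\sqrt n$, one must check that the cubic and higher Taylor remainders (of order $x^3/\sqrt n$ and smaller) vanish uniformly for $x \in [a,b]$, and likewise that the $o(1)$ error in Stirling's formula is uniform across the $O(\sqrt n)$ values of $k$ in the window; only then does the accumulated error in the Riemann sum remain $o(1)$. A technically cleaner, if less faithful to de Moivre, alternative avoids all of this: compute the characteristic function $\phi_n(t) = \E[e^{itZ_n}] = \cos^n(t/\sqrt n)$, expand $n\log\cos(t/\sqrt n) = -t^2/2 + O(t^4/n) \to -t^2/2$, and invoke L\'evy's continuity theorem together with the continuity of the Gaussian distribution function to obtain convergence at every pair $a<b$.
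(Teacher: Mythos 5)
The paper does not actually prove this theorem: it is stated as a classical result with a citation to de Moivre's \emph{Doctrine of Chances}, and within the paper's own logic it would follow as the special case $X_k-\tfrac12$, $\sigma^2=\tfrac14$ of Corollary~\ref{cor: clt iid}, which in turn rests on the (also unproved) Lindeberg CLT. So there is no in-paper argument to compare against, and your proposal should be judged on its own. It is a correct outline of the classical local-limit-theorem proof, which is also the historically faithful one: $S_n$ is $\mathrm{Bin}(n,\tfrac12)$, Stirling gives $\Pro[S_n=k]=\tfrac{2}{\sqrt{2\pi n}}e^{-x_k^2/2}(1+o(1))$ uniformly for $x_k=(k-\tfrac n2)/\sqrt{n/4}$ in a bounded window (the linear and cubic terms in the expansion of $(1+u)\log(1+u)+(1-u)\log(1-u)$ cancel by symmetry, so the remainder is $O(x^4/n)$, uniform on $[a,b]$), and the sum over the window is a Riemann sum with mesh $2/\sqrt n$. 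You correctly identify the one genuine technical point, namely uniformity of the Stirling and Taylor errors over the $O(\sqrt n)$ lattice points in the window; as sketched this is routine to complete. Your characteristic-function alternative ($\cos^n(t/\sqrt n)\to e^{-t^2/2}$ plus L\'evy continuity) is also valid and is essentially the route the paper implicitly relies on via Lindeberg. What the local approach buys is more than the theorem asks for --- pointwise asymptotics for the individual probabilities --- at the cost of the bookkeeping you describe; the Fourier route is shorter but yields only the integrated statement.
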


The theorem of de Moivre, when discussed in school for instance, can be nicely depicted using the Galton Board (also known as bean machine). 
Let us consider the experiment of throwing an ideal and fair coin $n$-times (i.e., head shows up with probability $1/2$). The single throws are regarded to be independent as none of them influences the other. The number $k$ of heads showing up in that experiment is a number between $0$ and $n$. The probability that we see heads exactly $k$-times is described by a binomial distribution. Now de Moivre's theorem says that, for a large number $n$ of tosses tending to infinity, the form of the discrete distribution function approaches the Gaussian curve. 

We have already mentioned at the beginning of this section that under suitable conditions a central limit theorem for general independent random variables may be obtained, not only those describing or modeling a coin toss. 

We formulate Lindeberg's central limit theorem. In what follows, we shall denote by $\ind_A$ the indicator function of the set $A$, i.e., $\ind_A(x)\in \{0,1\}$ with $\ind_A(x)=1$ if and only if $x\in A$. The {\em expectation} of a random 
variable $X$ with respect to the probability measure $\Pro$ is defined as
$\E[X]:=\int_\Omega X \dint \Pro$, if this integral is defined. $X$  is called
{\em centered} if and only if $\E[X]=0$. If $\E[|X|]<\infty$ we define
the variance by $\Var[X]:=\E\big[(X-\E[X])^2\big]$.

\begin{thm}[Lindeberg CLT, 1922] \label{thm:lindeberg}
Let $X_1,X_2,X_3,\dots$ be a sequence of independent, centered, and square integrable random variables. Assume that for each $\varepsilon\in(0,\infty)$,
\[
L_n(\varepsilon) := \frac{1}{s_n^2} \sum_{k=1}^n \E\big[X_k^2\,\ind_{\{|X_k| > \varepsilon s_n\}}\big] \,\,\stackrel{n\to\infty}{\longrightarrow}\,\, 0 \qquad(\textrm{Lindeberg condition}),
\] 
where $s_n^2:= \sum_{k=1}^n \Var[X_k]$. Then, for all $a,b\in\R$ with $a<b$, we have
\[
\lim_{n\to\infty}\Pro\bigg[ a\leq \frac{\sum_{k=1}^n X_k }{s_n} \leq b \bigg] = \frac{1}{\sqrt{2\pi}}\int_a^b e^{-\frac{x^2}{2}}\,\dint x\,.
\]
\end{thm}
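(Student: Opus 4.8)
The plan is to prove the theorem by Lindeberg's own replacement (``swapping'') method, which is elementary in the sense that it avoids Fourier analysis entirely. By the Portmanteau theorem together with a standard smoothing argument, and since the limiting standard normal law has a continuous distribution function, it suffices to show that for every $f\in C^3(\R)$ with $f,f',f'',f'''$ bounded one has $\E[f(S_n/s_n)]\to\E[f(Z)]$, where $S_n:=\sum_{k=1}^n X_k$ and $Z\sim\mathcal N(0,1)$; such $f$ form a convergence-determining class. To exploit independence I would introduce an auxiliary family $Y_1,\dots,Y_n$ of independent random variables, independent of the $X_k$, with $Y_k\sim\mathcal N(0,\Var[X_k])$. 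Since a sum of independent centered Gaussians is again Gaussian with the variances adding, $T_n:=\sum_{k=1}^n Y_k$ satisfies $T_n/s_n\sim\mathcal N(0,1)$ exactly, so $\E[f(T_n/s_n)]=\E[f(Z)]$ and it remains to control $\E[f(S_n/s_n)]-\E[f(T_n/s_n)]$.

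Next I would write this difference as a telescoping sum in which the $X_k$ are replaced by $Y_k$ one at a time. Setting
\[
W_k := \frac{1}{s_n}\Big(\sum_{j<k} Y_j + \sum_{j>k} X_j\Big),
\]
the $k$-th term of the telescope is $\E\big[f(W_k+X_k/s_n)\big]-\E\big[f(W_k+Y_k/s_n)\big]$, and crucially $W_k$ is independent of both $X_k$ and $Y_k$. A third-order Taylor expansion of $f$ about $W_k$ produces matching zeroth-, first-, and second-order contributions: using independence to factor the expectations, the first-order terms vanish because $\E[X_k]=\E[Y_k]=0$, and the second-order terms cancel because $\E[X_k^2]=\Var[X_k]=\E[Y_k^2]$. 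Hence each telescope term equals the difference of the two Taylor remainders, and the whole difference is bounded by $\sum_k\big(\E|R(W_k,X_k/s_n)|+\E|R(W_k,Y_k/s_n)|\big)$.

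The heart of the argument — and the step where the Lindeberg condition is indispensable — is the remainder estimate. Taylor's theorem gives the two-sided bound
\[
|R(w,x)| \le \min\Big(\tfrac{1}{6}\|f'''\|_\infty\,|x|^3,\ \|f''\|_\infty\, x^2\Big),
\]
and for the $X_k$-contribution I would split the expectation according to whether $|X_k|\le\eps s_n$ or $|X_k|>\eps s_n$. On the first event the cubic bound together with $|X_k|\le\eps s_n$ gives a contribution $\le\tfrac{\eps}{6}\|f'''\|_\infty\,\E[X_k^2]/s_n^2$; on the second event the quadratic bound gives $\le\|f''\|_\infty\,\E[X_k^2\,\ind_{\{|X_k|>\eps s_n\}}]/s_n^2$. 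Summing over $k$ and using $\sum_k\E[X_k^2]=s_n^2$ yields the total bound $\tfrac{\eps}{6}\|f'''\|_\infty+\|f''\|_\infty L_n(\eps)$, whose $\limsup$ in $n$ equals $\tfrac{\eps}{6}\|f'''\|_\infty$ and hence vanishes as $\eps\to0$. For the Gaussian contribution I would first deduce from the Lindeberg condition the Feller negligibility $\max_{k\le n}\Var[X_k]/s_n^2\to0$ (splitting $\E[X_k^2]$ at the same threshold gives the bound $\eps^2+L_n(\eps)$), and then bound $\sum_k\E|Y_k|^3/s_n^3$ by $c\,\big(\max_{k\le n}\Var[X_k]\big)^{1/2}/s_n$ for an absolute constant $c$, which likewise tends to $0$.

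Combining the two estimates shows $\E[f(S_n/s_n)]\to\E[f(Z)]$ for every such $f$, and the reduction above then upgrades this to $\Pro[a\le S_n/s_n\le b]\to\tfrac{1}{\sqrt{2\pi}}\int_a^b e^{-x^2/2}\,\dint x$. The main obstacle I anticipate is the bookkeeping in the remainder estimate: one must combine the cubic and quadratic Taylor bounds \emph{exactly} along the Lindeberg truncation, so that the ``bulk'' terms assemble into $s_n^2$ (contributing the small factor $\eps$) while the ``tail'' terms assemble into $L_n(\eps)$. The Gaussian side additionally requires extracting the negligibility condition, which is not part of the hypotheses but is forced by them.
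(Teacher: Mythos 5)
Your proposal is correct: it is a faithful and complete outline of Lindeberg's replacement method, with the telescoping swap against matched Gaussians, the cancellation of first and second moments, the combined cubic/quadratic remainder bound split along the truncation $|X_k|\le\eps s_n$, and the derivation of Feller negligibility needed for the Gaussian third-moment term. The paper itself does not prove this theorem --- it states it and refers to the survey of Eichelsbacher and L\"owe \cite{EL2014} for the Lindeberg method --- and your argument is precisely the proof that reference presents, so there is nothing to reconcile.
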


Lindeberg's condition guarantees that no single random variable has too much influence. This immediately becomes apparent when looking at the Feller condition, which is implied by Lindeberg's condition. We refrain from discussing or presenting the details and refer again to \cite{EL2014}.

\begin{rem}
Let us assume that the random variables in Theorem \ref{thm:lindeberg} are identically distributed and have variance $\Var[X_k]=\sigma^2\in(0,\infty)$ for all $k\in\N$. Then Lindeberg's condition is automatically satisfied: 
\[
s_n^2 = \sum_{k=1}^n\Var[X_k] = n\sigma^2
\]
and therefore, since the random variables $X_k$ are identically distributed, we obtain for any $\varepsilon>0$ that
\begin{align*}
L_n(\varepsilon) & = \frac{1}{n\sigma^2} \sum_{k=1}^n \E\Big[ X_k^2\ind_{\{|X_k|>\varepsilon s_n \}}\Big]
 = \frac{1}{n\sigma^2} \sum_{k=1}^n \E\Big[ X_1^2\,\ind_{\{|X_1|>\varepsilon \sqrt{n} \sigma \}}\Big] \cr
 & = \frac{1}{\sigma^2} \E\Big[ X_1^2\,\ind_{\{|X_1|>\varepsilon \sqrt{n} \sigma \}}\Big] \,\,\stackrel{n\to\infty}{\longrightarrow}\,\, 0,
\end{align*}
where the convergence to $0$ is a consequence of the Beppo Levi Theorem\footnote{Which is a version of the monotone convergence theorem.}.
\end{rem}

The previous remark immediately implies the classical central limit theorem for independent and identically distributed random variables. 
%In contrast to Lindeberg's theorem, this version is known not only among mathematicians and other scientists using probabilistic methods. \gu{Finde den Satz schwer lesbar. Vielleichteinfach weglassen. }

\begin{cor}\label{cor: clt iid}
Let $X_1,X_2,X_3,\dots$ be a sequence of independent and identically distributed random variables with $\E[X_1]=0$ and $\Var[X_1]=\sigma^2\in(0,\infty)$. Then, for all $a,b\in\R$ with $a< b$, we have
\[
\lim_{n\to\infty}\Pro\bigg[ a\leq \frac{\sum_{k=1}^n X_k }{\sqrt{n\sigma^2}} \leq b \bigg] = \frac{1}{\sqrt{2\pi}}\int_a^b e^{-\frac{x^2}{2}}\,\dint x\,.
\]
\end{cor}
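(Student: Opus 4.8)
The plan is to deduce the statement directly from the Lindeberg central limit theorem (Theorem~\ref{thm:lindeberg}), recognizing the i.i.d.\ situation as a special case of its hypotheses. First I would check that the three structural assumptions of Theorem~\ref{thm:lindeberg} are met. Independence is assumed outright. Since the $X_k$ are identically distributed with $\E[X_1]=0$, each $X_k$ is centered, i.e.\ $\E[X_k]=0$ for all $k\in\N$; and since $\Var[X_1]=\sigma^2\in(0,\infty)$, each $X_k$ is square integrable. Thus the sequence fits the framework of Theorem~\ref{thm:lindeberg}.

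Next I would compute the normalizing variance. By identical distribution, $\Var[X_k]=\sigma^2$ for every $k$, so
\[
s_n^2 = \sum_{k=1}^n \Var[X_k] = n\sigma^2,
\]
which gives $s_n = \sqrt{n\sigma^2}$. In particular the normalization appearing in the corollary coincides exactly with $s_n$, so that $\sum_{k=1}^n X_k / \sqrt{n\sigma^2} = \sum_{k=1}^n X_k / s_n$ and the two displayed probabilities are literally the same.

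The one hypothesis of Theorem~\ref{thm:lindeberg} that still requires verification is the Lindeberg condition $L_n(\varepsilon)\to 0$. But this is precisely the content of the Remark preceding the corollary: in the i.i.d.\ case all summands in $L_n(\varepsilon)$ equal $\E[X_1^2\,\ind_{\{|X_1|>\varepsilon\sqrt{n}\sigma\}}]$, and after dividing by $n\sigma^2$ one is left with $\sigma^{-2}\,\E[X_1^2\,\ind_{\{|X_1|>\varepsilon\sqrt{n}\sigma\}}]$, which tends to $0$ as $n\to\infty$ by monotone convergence (the Beppo Levi theorem), since $X_1$ is square integrable and the events $\{|X_1|>\varepsilon\sqrt{n}\sigma\}$ shrink to a null set. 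With all hypotheses of Theorem~\ref{thm:lindeberg} in force, its conclusion yields exactly the claimed convergence to the standard Gaussian distribution function.

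I do not expect any genuine obstacle here: the substance of the argument --- the verification of the Lindeberg condition --- has already been carried out in the Remark, so the corollary follows essentially immediately. The only point demanding a modicum of care is the bookkeeping that turns ``centered and square integrable with $\Var[X_1]=\sigma^2$'' into the precise hypotheses of Theorem~\ref{thm:lindeberg} and matches the normalizing constant $\sqrt{n\sigma^2}$ with $s_n$.
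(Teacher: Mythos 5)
Your proposal is correct and follows exactly the paper's route: the paper derives the corollary immediately from the preceding Remark, which verifies the Lindeberg condition in the i.i.d.\ case via $s_n^2=n\sigma^2$ and monotone convergence, and then applies Theorem~\ref{thm:lindeberg}. Your bookkeeping matching $\sqrt{n\sigma^2}$ with $s_n$ is the same observation the paper leaves implicit.
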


One thing we immediately notice in the general version of Lindeberg's central limit theorem is the universality towards the underlying distribution of the random variables. Hence, the distribution seems to be irrelevant. On the other hand, in both the central limit theorem of de Moivre and the one of Lindeberg, we require the random variables to be independent. Could it be that independence is the key to a Gaussian law of errors? If so, does this connection go deeper and beyond a purely probabilistic framework? In the remaining parts of this work we want to get to the bottom of those questions.

% % % % % % % % % % % % % % % % % % % % % % % % % % % % % % % % % % %
\subsection{Binary expansion and independence}\label{sec:dyadisch}
% % % % % % % % % % % % % % % % % % % % % % % % % % % % % % % % % % %

In this section we will present a first example which a priori is non probabilistic. It has to do with intervals corresponding to binary expansions of real numbers $x\in[0,1]$ and a corresponding product rule for their lengths.

For simplicity, we start by reminding the reader of the decimal expansion of a number $x\in[0,1)$. One can prove that each number $x\in[0,1)$ has a non-terminating and unique decimal expansion (see, e.g., \cite{Behrends2009}). For example,
\[
\frac{2}{7} = 0{,}285714285714\ldots
\]
and this expression is merely a short way for writing
\[
\frac{2}{7} = \frac{2}{10} + \frac{8}{10^2} + \frac{5}{10^3} + \frac{7}{10^4}+\dots\,.
\]
Generally, for each $x\in[0,1)$ there exist unique numbers $d_1(x),d_2(x),d_3(x),\ldots$ in $\{0,1,\ldots,9 \}$ such that
\[
x = \frac{d_1(x)}{10} + \frac{d_2(x)}{10^2} + \frac{d_2(x)}{10^3} + \ldots\,.
\]
Analogous to the decimal expansion, each number $x\in[0,1)$ has a binary expansion (also known as dyadic expansion), i.e., there are unique numbers $b_1(x),b_2(x),b_3(x),\ldots$ in the set $\{0,1 \}$ such that
\begin{equation}\label{eq:dyadische entwicklung}
x= \frac{b_1(x)}{2}+ \frac{b_2(x)}{2^2} + \frac{b_3(x)}{2^3} + \dots\,.
\end{equation}
For instance, we can write
%\[
%\frac{3}{4} = \frac{1}{2} + \frac{1}{2^2} + \frac{0}{2^3} + \dots\,,
%\]
%or 
\[
\frac{2}{7} = \frac{0}{2} + \frac{1}{2^2} + \frac{0}{2^3} + \frac{0}{2^4} + \frac{1}{2^5} + \frac{0}{2^6} + \dots\,.
\] 
To guarantee uniqueness in the expansion, we agree to write the expansion in such a way that infinitely many of the binary digits are zero. As already indicated by the way we write it, the binary digits are functions in the variable we denoted by $x$, i.e.,
\[
b_k:[0,1) \to\{0,1\}, \qquad x\mapsto b_k(x).
\]
Sometimes these functions are called Rademacher functions, although Hans Rademacher (3. April 1892 in Wandsbek; 7. February 1969 in Haverford) defined a slightly different version \cite{Rademacher1922}. The value that $b_k$ takes at $x$ not only provides information about the $k$-th binary digit of $x$, but also about $x$ itself. Obviously, if $b_1(x)=1$, then $x\in[1/2,1)$ or if $b_2(x) = 0$, then $x\in[0,1/4)\cup[1/2,3/4)$. More generally, if we define for each $k\in\N$ the set
\[
B_k:= \bigcup_{j=1}^{2^{k-1}} \Big[ \frac{2j-2}{2^k},\frac{2j-1}{2^k} \Big),
\]
then
\[
b_k(x) = 1_{[0,1)\setminus B_k}(x) = 
\begin{cases}
0 & : x\in B_k \\
1 & : x\in [0,1)\setminus B_k\,. 
\end{cases}
\]
These considerations yield the following: if $n\in\N$, $k_1,\dots,k_n\in\N$, and $\varepsilon_1,\dots\varepsilon_n\in\{0,1\}$, then
\begin{align*}
\lambda\left( \bigcap_{i=1}^n b_{k_i}^{-1}(\varepsilon_i)\right) & =\lambda\big(\{x\in[0,1)\,:\,b_{k_1}=\varepsilon_1,\dots,b_{k_n}=\varepsilon_n \}\big) \\
& = \Big(\frac{1}{2}\Big)^n = \prod_{i=1}^{n} \lambda\big(\{x\in[0,1)\,:\,b_{k_i}=\varepsilon_i\big\}\big),
\end{align*}
where $\lambda$ denotes the $1$-dimensional Lebesgue measure (which in this
case simply assigns the length to an interval). This implies that the binary
coefficients as functions in $x\in[0,1)$, are independent; a result seemingly
discovered by French mathematician \'Emile Borel (7. January 1871 in
Saint-Affrique; 3. February 1956 in Paris) in 1909 \cite{Borel1909}. In
particular, the random variables $X_k=b_k$ satisfy the assumptions of de
Moivre's theorem (Theorem \ref{clt:de moivre}) and so we obtain a central limit
theorem for binary expansions $b_k$. Probability in the sense of coin tosses or
events has not played any role in our arguments. 
(Nevertheless, technically the $X_k$'s are
bona-fide random variables on the probability space 
$\big([0,1),\mathcal {B}([0,1)),\lambda)$.)

% % % % % % % % % % % % % % % % % % % % % % % % % % % % % % % % 
\subsection{Prime factors and independence}\label{subsec:primfaktoren-unabh}
% % % % % % % % % % % % % % % % % % % % % % % % % % % % % % % %

We shall now consider a fundamentally different example of independence
in mathematics. Take a sufficiently large natural number $N\in \N$.
We note that roughly half of the numbers between $1$ and $N$ are divisible
by the prime number $2$, namely $2,4,6$ and so on. In the same way, roughly
one third of the numbers between $1$ and $N$ are divisible
by the prime number $3$, namely $3,6,9$ and so on. If we now consider 
the numbers between $1$ and $N$ which are divisible by $6$, then this is 
again roughly one sixth. However, divisibility by $6$ is equivalent to 
both divisibility by $2$ {\em and} $3$ and we can write this as
\[
\frac{1}{6} = \frac{1}{2}\cdot \frac{1}{3}
\]
for the corresponding fractions of numbers between $1$ and $N$.  But this 
reminds us of the multiplication of probabilities --- as occurring in the
concept of independence! Of course, the same argument applies for 
divisibility by general distinct primes $p$ and $q$ as well as by
any finite number of primes. We can say, in this sense, that divisibility
of a number by distinct primes is independent.

Apparently, every second natural number is divisible by $2$, so that 
the numbers with this property constitute one half of all natural numbers.
One could thus think that a randomly chosen natural number is 
divisible by $2$ with probability  $\frac 1 2$. In the same way, 
this number would be divisible by $3$ with probability $\frac 1 3$, and
an analog statement would hold for divisibility by every natural number.

It turns out that this notion, although intuitive, is incompatible 
with Kolmogorov's concept of probability in that no probability measure
on the naturals with the above property exists.

To see this, define, for every pair of numbers
$n,k$ with 
$n\in\N$ and $k\in\{1,\dots,n \}$ the set 
$A_{n,k}:=\{jn+k\colon j\in\N\cup\{0\} \}$. 
For $k\ne n$,  $A_{n,k}$ consists of all natural numbers which yield
remainder $k$ after division by $n$,
while for $k=n$ we have $A_{n,k}=A_{n,n}$, which is the set of all natural numbers that are divisible by $n$. We denote by  $\Pc(\N)$ set of all subsets of $\N$.

\begin{lemma}\label{lem:nomeasure}
Let $\mu$ be a finite measure on the set $\Pc(\N)$, which satisfies 
\begin{equation}\label{eq:union-prime}
\mu(A_{p,k})=\mu(A_{p,p})
\end{equation}
for every prime number $p$ and every $k\in\{1,\dots,p \}$. 
Then $\mu(\{m\})=0$ for every
$m\in \N$, and therefore 
$\mu(A)=0$ for all $A\subseteq \N$.
\end{lemma}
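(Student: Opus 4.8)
The plan is to exploit the fact that, for each prime $p$, the sets $A_{p,1},\dots,A_{p,p}$ form a finite partition of $\N$ into residue classes modulo $p$, all of which are assigned the same $\mu$-mass by hypothesis \eqref{eq:union-prime}. Since $\mu$ is finite, this forces each such class to be small, and a fixed singleton $\{m\}$ sits inside one of these classes for every prime $p>m$; letting $p$ run through the infinitely many primes then squeezes $\mu(\{m\})$ to $0$.

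First I would record the partition: every $n\in\N$ satisfies $n\equiv r\pmod p$ for a unique $r\in\{0,1,\dots,p-1\}$, and $n$ lies in $A_{p,r}$ when $r\neq 0$ and in $A_{p,p}$ when $r=0$. Hence $\N=\bigcup_{k=1}^{p}A_{p,k}$ is a disjoint union. Applying finite additivity of $\mu$ together with the hypothesis $\mu(A_{p,k})=\mu(A_{p,p})$ for all $k$, I obtain
\[
\mu(\N)=\sum_{k=1}^{p}\mu(A_{p,k})=p\,\mu(A_{p,p}),
\]
so that $\mu(A_{p,p})=\mu(\N)/p$. Here finiteness of $\mu$ is essential: it guarantees $\mu(\N)<\infty$, so that the division is meaningful and the resulting bound is nontrivial.

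Next, I would fix $m\in\N$ and choose any prime $p>m$. Then $m\in\{1,\dots,p-1\}$ and $m\equiv m\pmod p$, so $\{m\}\subseteq A_{p,m}$, whence by monotonicity and the previous step
\[
\mu(\{m\})\le \mu(A_{p,m})=\mu(A_{p,p})=\frac{\mu(\N)}{p}.
\]
Since there are infinitely many primes, I may let $p\to\infty$ along the primes to conclude $\mu(\{m\})=0$. Finally, for an arbitrary $A\subseteq\N$, countable additivity of $\mu$ yields $\mu(A)=\sum_{m\in A}\mu(\{m\})=0$.

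The argument is essentially mechanical, and I expect no genuine obstacle. The only point requiring care is to notice that the conclusion relies on two \emph{global} facts rather than merely the local product rule: the finiteness of $\mu$ (which alone lets one bound $\mu(A_{p,p})$ by $\mu(\N)/p$) and the infinitude of the primes (which drives this bound to $0$). It is also worth stating explicitly that a ``measure'' here is understood to be countably additive, as this is precisely what licenses the concluding summation over the singletons of $A$.
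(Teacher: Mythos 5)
Your proof is correct and follows essentially the same route as the paper's: the partition of $\N$ into the $p$ residue classes $A_{p,1},\dots,A_{p,p}$, finite additivity giving $\mu(A_{p,k})=\mu(\N)/p$, the bound $\mu(\{m\})\le\mu(\N)/p$ for arbitrarily large primes $p$, and finally $\sigma$-additivity to kill every $A\subseteq\N$. The only cosmetic difference is that you restrict to primes $p>m$ so that the class containing $m$ is $A_{p,m}$, while the paper writes $m=jp+k$ for a general prime; this changes nothing.
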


\begin{proof}
First note that \eqref{eq:union-prime} implies 
$\mu(A_{p,k})=\mu(\N)/p$ for every prime number $p$ and all 
$k\in\{ 1,\dots,p\}$: indeed, if $p$ is a prime number, then
\[
\mu(\N) = \mu\bigg( \bigcup_{k\in\{1,\dots,p\}} A_{p,k}\bigg) = \sum_{k\in\{1,\dots,p\}} \mu(A_{p,k}) \stackrel{\eqref{eq:union-prime}}{=} p \mu(A_{p,p})\,,
\]
where we used the finite additivity of $\mu$ to obtain the second equality.
Combining $\mu(\N)=p \mu(A_{p,p})$  with 
\eqref{eq:union-prime} gives $\mu(A_{p,k})=\mu(\N)/p$ for all $k\in\{1,\dots,p \}$.

Now fix
 $m\in \N$. For every prime number  $p$ there exist numbers 
$j\in \N\cup\{0\}$ and $k\in \{1,\dots,p\}$ such that $m=jp+k$. 
Thus $m\in A_{p,k}$. From our earlier considerations it follows
\[
\mu(\{m\})\le \mu(A_{p,k})=\mu(\N)/p\,.
\]
Since $\mu(\N)<\infty$ by assumption, and since there are arbitrarily large primes, it follows that 
$\mu(\{m\})=0$. But since  $\mu$ is a measure, and thus is  $\sigma$-additive, 
we get $\mu(A)=\sum_{m\in A}\mu(\{m\})=0$ for every $A\subseteq\N$.
\end{proof}

So there exists no measure on $\Pc(\N)$ having the desired property
\eqref{eq:union-prime}. But could it be that we have chosen
the domain of $\mu$ too large? The next proposition shows that there 
is no smaller domain containing all $A_{p,k}$.
\begin{proposition}
We have
$\sigma\big(\big\{A_{p,k}\colon p\;\text{prime},\, k\in\{1,\dots,p\}\big\}\big)=\Pc(\N)$.
\end{proposition}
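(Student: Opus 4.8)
The plan is to show that every singleton $\{m\}$ with $m\in\N$ belongs to the $\sigma$-algebra $\mathcal F:=\sigma\big(\{A_{p,k}\colon p\text{ prime},\,k\in\{1,\dots,p\}\}\big)$. Once this is established, the proposition follows at once: any subset $A\subseteq\N$ is countable, so $A=\bigcup_{m\in A}\{m\}$ is a countable union of singletons and hence lies in $\mathcal F$, since $\sigma$-algebras are closed under countable unions. This reduces the entire statement to the task of isolating individual natural numbers inside $\mathcal F$.

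First I would reinterpret the generators as residue classes. For a fixed prime $p$, the sets $A_{p,1},\dots,A_{p,p}$ partition $\N$: for $k<p$ the set $A_{p,k}$ collects the natural numbers congruent to $k$ modulo $p$, while $A_{p,p}$ collects the positive multiples of $p$, that is, those congruent to $0$ modulo $p$. Consequently, for any $m\in\N$ there is a unique index $k_p\in\{1,\dots,p\}$ with $m\in A_{p,k_p}$, and for this index $A_{p,k_p}=\{n\in\N\colon n\equiv m \pmod p\}$.

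The key step is then to pass to the countable intersection over all primes. Fixing $m\in\N$, I set $I_m:=\bigcap_{p\text{ prime}}A_{p,k_p}$, which belongs to $\mathcal F$ as a countable intersection of generators. Clearly $m\in I_m$. Conversely, if $n\in I_m$, then $n\equiv m\pmod p$ for every prime $p$, so $|n-m|$ is divisible by every prime. Since a positive integer has only finitely many prime divisors --- in particular none exceeding its own value --- this forces $|n-m|=0$, i.e. $n=m$. Hence $I_m=\{m\}$, and every singleton lies in $\mathcal F$.

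I expect the main subtlety to be the observation that a \emph{finite} intersection does not suffice: intersecting over the primes $p_1,\dots,p_r$ only pins $m$ down modulo the product $p_1\cdots p_r$ and thus, by the Chinese remainder theorem, yields an entire infinite residue class. It is precisely the passage to the full countable intersection, combined with the elementary fact that no positive integer is divisible by arbitrarily large primes, that collapses this residue class to the single point $\{m\}$. Everything else is routine once this isolation step is in place.
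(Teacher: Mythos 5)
Your proposal is correct and follows essentially the same route as the paper: both reduce the claim to showing that every singleton $\{m\}$ is a countable intersection of the residue-class generators over primes, the only cosmetic difference being that you intersect over all primes and argue via divisibility of $|n-m|$, while the paper intersects only over primes $p>m$ and reads off $\ell=0\cdot p+m=m$ directly. No gaps.
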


\begin{proof}
We define the set $\Sigma:=\sigma\big(\big\{A_{p,k}\colon p\;\text{prime},\, k\in\{1,\dots,p\}\big\}\big)$.
It is sufficient to show that $\{m\}\in\Sigma$ for all $m\in\N$. To this end fix $m\in\N$. For every prime $p>m$ we have $m\in A_{p,m}$, since $m=0\cdot p+m$. 
Therefore,
\[m\in \bigcap_{p\;\text{prime},\,p>m} A_{p,m}\,.\]
Let $\ell\in \bigcap_{p\;\text{prime},\,p>m} A_{p,m}$. Then there exists a
prime $p$ with $p>\ell$ so that, since  $\ell\in A_{p,m}$,
$\ell=0\cdot p+m=m$. 
Thus, $\{m\}= \bigcap_{p\;\text{prime},\,p>m} A_{p,m}\in \Sigma$.
\end{proof}

\begin{rem}
Equation \eqref{eq:union-prime} in Lemma \ref{lem:nomeasure} formalizes
our earlier intuition that if $\mu(A_{p,p})$ is the fraction of 
numbers divisible by $p$ then this should equal the fraction 
of numbers giving remainder 1 and so on. The  Lemma shows us that there cannot
be a non-trivial finite measure $\mu$ with this property and therefore
we cannot assign meaningful probabilities to those subsets in the 
framework of  Kolmogorov's theory.
In contrast to the independence of distinct binary digits of a number in 
$[0,1)$, 
we cannot 
cover the independence of divisibility by distinct primes of a number in $\N$
using Kolmogorov's notion of independence of random variables. 
\end{rem}

% % % % % % % % % % % % % % % % %
\section{Relative Measures}
% % % % % % % % % % % % % % % % %

A possible remedy is a notion related to one of the earlier approaches to probability theory going back at least to Richard von Mises (19. April 1883 in Lviv; 14. Juli 1953 in
Boston) and can be found in early work of Kac and Steinhaus. However, we were unable to trace the original source. In any case, this approach has to a large extend been replaced by Kolmogorov's axiomatization of probability.

One of the central notions in this manuscript shall be referred to as relative measure and its definition and properties be discussed in the following section.

% % % % % % % % % % % % % % % % % % % % % % % % % %
\subsection{Relative measurable subsets of $\N$}
% % % % % % % % % % % % % % % % % % % % % % % % % %

%We shall only treat in detail the discrete case. Without difficulty what follows can be lifted to a continuous setting. We refrain from providing any details for the latter and only comment on it in Subsection \ref{subsec: continuous setting}.

\begin{df}[Relative measurable subsets of $\N$ and relative measure]\label{def:diskr rel mass}
We say that a subset $A\subseteq \N$ is {\em relatively measurable} if and only if the limit
\[
\lim_{N\to\infty} \frac{|A\cap\{1,\dots,N\}|}{N}\,,
\]
exists. In that case we define the {\em relative measure} $\mu_R$ of $A$ as exactly this limit,
\[
\mu_R(A):=\lim_{N\to\infty} \frac{|A\cap\{1,\dots,N\}|}{N}\,.
\] 
\end{df}

It is easy to see that the collection of relatively measurable subsets of $\N$ forms an algebra and that $\mu_R$ is a non-negative and (finitely-)additive set function on it.
Moreover, it is obvious that every finite subset of $\N$ is relatively measurable with relative measure $0$. 

The sets $A_{n,k}$, $n\in\N$ and $k\in\{0,\dots,n-1\}$ defined in Subsection \ref{subsec:primfaktoren-unabh} are relatively measurable with
\[
\mu_R(A_{n,k})=\frac{1}{n}\,.
\]
It is a direct consequence of Lemma \ref{lem:nomeasure} that $\mu_R$ cannot be $\sigma$-additive. Indeed,
\[
\mu_R\Big(\bigcup_{i\in\N}\{i\}\Big) = \mu_R(\N) = 1 \neq 0 = \sum_{i\in\N} \mu_R(\{i\})\,.
\]
On the other hand, we can construct sets which are {\em not} relatively measurable.

\begin{example}
Let $a_1=0$ and define 
\[
a_{k}:=\begin{cases}
0 & : 2^{2m}< k\le 2^{2m+1}\text{ for some }m\in\N_0 \\
1 & : 2^{2m+1} <k\le 2^{2m+2}\text{ for some }m\in\N_0 \,.
\end{cases}
\]
Consider the level set $A:=\{k\in\N\,\colon\, a_k=1\}$. Then $A$ is not relatively measurable because
\begin{align*}
2^{-(2m+2)}|A\cap \{1,\dots,2^{2m+2}\}|
&=2^{-(2m+2)}2(1+2^2+\dots+2^{2m+1})
=2^{-(2m+1)}\frac{2^{2m+2}-1}{3}\to \frac{2}{3}\\
2^{-(2m+1)}|A\cap \{1,\dots,2^{2m+1}\}|
&=2^{-(2m+1)}2(1+2^2+\dots+2^{2m-1})=2^{-(2m)}\frac{2^{2m}-1}{3}\to \frac{1}{3}\,.
\end{align*}
\end{example}

\medskip
The relative measure allows us to conceive and show the independence of divisibility by different primes in a formal way. In this regard this notion is superior to a measure in the sense of Kolmogorov. We are now going to prove the independence of $A_{p,p}$ and $A_{q,q}$ for different primes $p$ and $q$. By the fundamental theorem of arithmetic a number is divisible by $p$ as well as $q$ if and only if it is divisible by their product $pq$, and so 
$A_{p,p}\cap A_{q,q}=A_{pq,pq}$.
Therefore, we obtain
\[
\mu_R(A_{p,p}\cap A_{q,q})=\mu_R(A_{pq,pq})=\frac{1}{p\cdot q}
=\frac{1}{p}\cdot \frac{1}{q}=\mu_R(A_{p,p})\,\mu_R(A_{q,q})\,,
\]
which is the product rule so characteristic for independence. Similarly, one can show this property for each finite collection of different primes $p_1,\dots,p_m$. 

The following lemma shows that if the indicator function of a subset of the natural numbers is eventually periodic, then the relative measure of that set is equal to the average over the period. We shall leave the proof to the reader.
	
\begin{lemma}\label{lemma:periodisch}
Consider a set $A\subseteq \N$. If there exist $k\in \N$ and $n_0\in \N$ such that 
\[
\forall n\ge n_0\colon 1_A(n+k)=1_A(n)\,,
\] 
then $A$ is relatively measurable and 
\[
\mu_R(A)=\frac{|A\cap \{n_0+1,\dots,n_0+k\}|}{k}.
\]
\end{lemma}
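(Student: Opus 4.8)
The plan is to compute the counting function $|A\cap\{1,\dots,N\}|$ explicitly and show that, after normalizing by $N$, it converges to the claimed average. Write $c:=|A\cap\{n_0+1,\dots,n_0+k\}|$ for the number of elements of $A$ in one full period. The first step is to observe that the hypothesis $1_A(n+k)=1_A(n)$ for all $n\ge n_0$ says precisely that the restriction of $1_A$ to $\{n_0+1,n_0+2,\dots\}$ is periodic with period $k$. Consequently every translated block $\{n_0+ik+1,\dots,n_0+(i+1)k\}$, $i\in\N_0$, contains exactly $c$ elements of $A$: applying the periodicity relation $i$ times (each argument stays $\ge n_0$, so the relation is always applicable) shows that the values of $1_A$ on such a block agree term by term with its values on the base block $\{n_0+1,\dots,n_0+k\}$.

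Second, I would isolate a fixed initial segment from the periodic tail. For $N>n_0$ I perform Euclidean division $N-n_0=jk+r$ with $j\in\N_0$ and $0\le r<k$, and split
\[
|A\cap\{1,\dots,N\}| = |A\cap\{1,\dots,n_0\}| + \sum_{i=0}^{j-1}|A\cap\{n_0+ik+1,\dots,n_0+(i+1)k\}| + |A\cap\{n_0+jk+1,\dots,N\}|.
\]
By the first step the middle sum equals $jc$, the leading term is a constant $c_0:=|A\cap\{1,\dots,n_0\}|\le n_0$ independent of $N$, and the trailing term (an incomplete block of length $r$) is some integer between $0$ and $r\le k-1$. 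Hence $|A\cap\{1,\dots,N\}|=jc+\theta_N$, where $\theta_N$ is a bounded error satisfying $0\le\theta_N\le c_0+k$.

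Finally, dividing by $N$ and recalling that $j=(N-n_0-r)/k$ yields
\[
\frac{|A\cap\{1,\dots,N\}|}{N} = \frac{c}{k}\cdot\frac{N-n_0-r}{N} + \frac{\theta_N}{N}.
\]
Letting $N\to\infty$, the factor $(N-n_0-r)/N\to 1$ because $n_0$ is fixed and $r$ is bounded, while $\theta_N/N\to 0$ since $\theta_N$ is bounded. Therefore the defining limit exists, so $A$ is relatively measurable, and its value is $c/k=|A\cap\{n_0+1,\dots,n_0+k\}|/k$, as asserted. I do not expect any genuine obstacle here: the argument is elementary bookkeeping, and the only point demanding a little care is to keep the two boundary contributions — the fixed initial segment $\{1,\dots,n_0\}$ and the incomplete final block of length $r$ — uniformly bounded in $N$, so that both are annihilated upon division by $N$ in the limit.
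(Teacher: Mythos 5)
Your argument is correct and complete: the block decomposition into a fixed initial segment, $j$ full periods each contributing $c$ elements, and a bounded incomplete tail is exactly the elementary bookkeeping the lemma calls for, and your handling of the two boundary terms is careful where care is needed. The paper explicitly leaves this proof to the reader, and yours is the standard argument one would supply, so there is nothing to compare against or correct.
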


\begin{rem}[Independence and information]\label{rem:independence and information}
One important property of statistical independence is that knowledge
of one event, say $B$, does not present any information about an 
independent event $A$:
for independent $A,B$ we have $\Pro(A|B)=\Pro(A)$.

A similar situation occurs with numbers: knowledge about divisibility 
by one prime does not tell us anything about divisibility by another one.
This holds also true for the digits considered earlier:
if we know the $k$-th digit of a number $x\in [0,1)$ 
this does not tell us anything about 
its $\ell$-th digit.
\end{rem}

Consider now, for every 
 $j\in \N$ 
the function  $\beta_j\colon \N\to\{0,1\}$ defined by 
\begin{equation}
\beta_j(n):=\begin{cases}
0 & : \lfloor \frac{n}{2^{j-1}}\rfloor \text{ is even }\\
1 & : \lfloor \frac{n}{2^{j-1}}\rfloor \text{ is uneven }\,,
\end{cases}
\end{equation}
such that $\beta_j(n)$ is the $j$-th binary digit of $n$,
and
\[
n=\sum_{j=1}^\infty \beta_j(n)\,2^{j-1}
=\sum_{j=1}^{\lfloor\log_2(n)\rfloor+1} \beta_j(n)\,2^{j-1}\,.
\]
To every  $j\in \N$ assign the set $B_j:=\{n\in \N\colon
\beta_j(n)=1\}$, i.e.~the set of all natural numbers for which 
the $j$-th binary digit equals $1$.

It follows from the definition of binary digits that for each $j\in \N$
\[
B_j=\bigcup_{m\in\N\cup\{0\}}\{2^{j-1}(2m+1),\dots,2^{j-1}(2m+1)+2^{j-1}-1\}\,,
\]
which means that
\[
B_j^c=\bigcup_{m\in\N\cup\{0\}}\{2^{j}m,\dots,2^{j}m+2^{j-1}-1\}\,,
\]
and so $\mu_R(B_j)=\frac{1}{2}$. Moreover, for every choice of $j,k\in \N$ with
$j<k$, we have
%\begin{align*}
%\mu_R(B_j\cap B_k)&=\mu_R(B_j)\mu_R(B_k)
%\end{align*}
$\mu_R(B_j\cap B_k)=\mu_R(B_j)\mu_R(B_k)$, which can be proven using Lemma \ref{lemma:periodisch}.

\begin{df}
Let $(A_j)_{j\in J}$ be a family of relatively measurable subsets of $\N$. We say that $(A_j)_{j\in J}$ are {\em independent} if and only if for every $m\in\N$ and every subset $I$ of cardinality $m$
\[
\mu_R\Big(\bigcap_{i\in I}A_{i}\Big)=
\prod_{i\in I}\mu_R(A_{i})\,.
\]
\end{df}

Summarizing the preceding thoughts, we obtain the following result.

\begin{proposition}
(1) For $n\in \N$ and $k\in\{1,\dots,n\}$, let
$A_{n,k}:=\{jn+k\colon j\in\N\cup\{0\} \}$. 
Then the family $\big(A_{p,p}\big)_{p\in\N,p\text{ prime}}$
is independent.
%Sei 
%$A_{n,k}:=\{jn+k\colon j\in\N\cup\{0\} \}$ für $n,k\in \N$. 
%Dann ist die Familie $\big(A_{p,p}\big)_{p\in\N,p\text{ prim}}$
%unabhängig.
\vskip 1mm
\noindent (2) For every $j\in\N$ let
\(
B_j=\bigcup_{m\in\N\cup\{0\}}\{2^{j-1}(2m+1),\dots,2^{j-1}(2m+1)+2^{j-1}-1\}\,.
\)
Then the family $\big(B_{j}\big)_{j\in\N}$
is independent.
\end{proposition}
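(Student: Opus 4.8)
The plan is to verify the product rule from the definition of independence separately for the two families, in each case reducing it to a relative-measure computation accessible with the tools already established. In both parts the key is that the relevant intersection is itself a member of a class whose relative measure we can compute explicitly.

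For (1), fix finitely many distinct primes $p_1,\dots,p_m$ and recall that $A_{p,p}$ is precisely the set of multiples of $p$. The intersection $\bigcap_{i=1}^{m}A_{p_i,p_i}$ therefore consists of all natural numbers divisible by each $p_i$. Since the $p_i$ are distinct primes, the fundamental theorem of arithmetic shows that an integer is divisible by all of them if and only if it is divisible by their product $P:=p_1\cdots p_m$; this is the $m$-fold extension of the identity $A_{p,p}\cap A_{q,q}=A_{pq,pq}$ noted just above the proposition, and it gives $\bigcap_{i=1}^{m}A_{p_i,p_i}=A_{P,P}$. Invoking the formula $\mu_R(A_{n,k})=1/n$, the left-hand side of the independence condition equals $1/P=\prod_{i=1}^{m}1/p_i$, which is exactly $\prod_{i=1}^{m}\mu_R(A_{p_i,p_i})$. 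This part is essentially immediate once the set-theoretic identity is in place.

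For (2), fix distinct indices $j_1<\cdots<j_m$ and set $J:=j_m$. I would first observe that each $\beta_j$, and hence the indicator of $B_j$, is periodic in $n$ with period $2^{j}$, since $\beta_j(n)$ depends only on the parity of $\lfloor n/2^{j-1}\rfloor$. Consequently the indicator of $\bigcap_{i=1}^{m}B_{j_i}$ is periodic with period $2^{J}$, so Lemma \ref{lemma:periodisch} applies and reduces $\mu_R\big(\bigcap_{i=1}^{m}B_{j_i}\big)$ to the fraction of a block of $2^{J}$ consecutive integers on which $\beta_{j_1}=\cdots=\beta_{j_m}=1$. The crux is then a counting step: as $n$ runs over any block of $2^{J}$ consecutive integers, the low-order digit tuple $(\beta_1(n),\dots,\beta_J(n))$ takes each value in $\{0,1\}^{J}$ exactly once, a fact transparent from $n=\sum_{j=1}^{\infty}\beta_j(n)2^{j-1}$. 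Fixing the $m$ digits indexed by $j_1,\dots,j_m$ to $1$ and leaving the remaining $J-m$ digits free therefore yields exactly $2^{J-m}$ admissible residues, whence $\mu_R\big(\bigcap_{i=1}^{m}B_{j_i}\big)=2^{J-m}/2^{J}=2^{-m}$. Since $\mu_R(B_j)=1/2$, this equals $\prod_{i=1}^{m}\mu_R(B_{j_i})$, as required.

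The main obstacle is not a deep one but a matter of care: one must confirm the counting claim rigorously, namely that the first $J$ binary digits biject with $\{0,1\}^{J}$ over a full period and that the window furnished by Lemma \ref{lemma:periodisch} indeed spans one such period irrespective of alignment. I would settle this by aligning the block to a multiple of $2^{J}$, where the correspondence $n\mapsto(\beta_1(n),\dots,\beta_J(n))$ is immediate, and by noting that every block of $2^{J}$ consecutive integers meets each residue class modulo $2^{J}$ exactly once, so the count—and hence the relative measure—is independent of the choice of window.
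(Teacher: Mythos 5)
Your argument is correct and follows essentially the same route the paper sketches: part (1) via the fundamental theorem of arithmetic reducing $\bigcap_i A_{p_i,p_i}$ to $A_{P,P}$ with $\mu_R(A_{n,k})=1/n$, and part (2) via periodicity of the digit indicators and Lemma~\ref{lemma:periodisch}, with the digit-counting argument supplying the details the paper leaves to the reader. No discrepancies to report.
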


It is quite interesting that similar results to the ones for expansions of real numbers in $[0,1)$ with respect to the Lebesgue measure can be obtained for the expansion of natural numbers with respect to the relative measure on $\N$.

% % % % % % % % % % % % % % % % % % % % % % % % % % % % % %
\subsection{Relatively measurable sequences and their distribution}
% % % % % % % % % % % % % % % % % % % % % % % % % % % % % %

%\gu{Man könnte freilich im Folgenden (und in der Überschrift) 
%anstelle von "`Folgen"' schreiben "`Funktionen"'. So wie Erdös und Kac,
%bzw wie Kac und Steinhaus, aber halt mit Funktionen $\N\to\R$. }
 
In this subsection we shall introduce the notion of a relatively measurable
sequence and, in broad similarity to the way independence is defined in the
sense of Kolmogorov, we introduce the notion of relatively independent
sequences $x,y\colon \N \to \R$ and define a distribution function with respect
to relative measures. As we shall see, such a distribution function does not
possess all the properties that --- coming from probability theory --- we might expect it to have.

\begin{df}[Relatively measurable sequence]
A sequence $x\colon \N\to \R$ is said to be {\em relatively measurable} if and only if the pre-image 
\[
x^{-1}(I):=\big\{n\in \N\colon x_n\in I\big\}
\]
of each interval $I\subseteq \R$ under $x$ is a relatively measurable subset of $\N$.
\end{df}

To us an interval means a convex subset of $\R$, in particular singleton sets are intervals. Natural examples of measurable sequences are indicator functions of relatively measurable sets and their finite sums. 

We shall now introduce what it means for two sequences to be independent with respect to a relative measure. This is again done via a product rule.

\begin{df}[Independent sequences]
Two relatively measurable sequences $x,y\colon \N\to\R$ are said to be $\mu_R$-independent if and only if for any two intervals 
$I,J\subseteq \R$ we have
\[
\mu_R\big(x^{-1}(I)\cap y^{-1}(J)\big)=\mu_R\big(x^{-1}(I)\big)\,\mu_R\big(y^{-1}(J)\big)\,.
\]
\end{df}

This definition can be generalized in an obvious way to any finite number of relatively measurable sequences.  

We now turn to the definition of a (relative) distribution function of a relatively measurable sequence. 

\begin{df}[Distribution function]
Let $x\colon \N\to\R$ be a relatively measurable sequence. Then the function 
\[F_x:\R\to [0,1],\quad
F_x(z) := \mu_R\Big(\big\{n\in\N\,:\, x_n \in(-\infty,z] \big\}\Big)
\]
is called the {\em (relative) distribution function} of $x$.
\end{df}

By its very definition such a distribution function resembles a classical
distribution function we know from probability theory. In particular, it is
immediately clear that it is non-decreasing. However, in general not all
properties we may expect from a relative distribution function have to hold.

%\begin{proposition}
%Es sei $f:\R\to\R$ relativ messbar und $F_f$ die Verteilungsfunktion von $f$. Dann gelten die folgenden Aussagen:
%\begin{enumerate}[(1)]
%\item Die Folge $F_f$ ist monoton wachsend.
%\item Die Folge $F_f$ erf\"ullt im Allgemeinen nicht die Bedingungen
%\[
%\lim_{x\to-\infty}F_f(x)=0 \qquad\text{oder}\qquad \lim_{x\to\infty}F_f(x)=1.
%\]
%\item Die Folge $F_f$ muss weder rechtsseitig noch linksseitig stetig sein.
%\end{enumerate}
%\end{proposition}
\begin{example}
Consider the sequence $x:\N\to\R$ given by
\[
x_n:=\begin{cases}
-n & \text{ if } n=4k \text { for  some }k\in \N_0\\
0 & \text{ if } n=4 k+1  \text { for  some }k\in \N_0\\
\frac{1}{n} & \text{ if } n=4k+ 2 \text { for  some }k\in \N_0\\
n & \text{ if } n=4k+ 3 \text { for  some }k\in \N_0\,.
\end{cases}
\] 
Then it is easy to see that $x$ is relatively measurable and that its relative distribution function  is given by
\[
F_x(z)=\frac{1}{4}1_{(-\infty,0)}(z)+\frac{2}{4}1_{\{0\}}(z)+\frac{3}{4}1_{(0,\infty)}(z)\,.
\]
Hence, 
$F_x$ is neither left nor right continuous, and we have
\[ 
\lim_{z\to -\infty}F_x(z)>0 \qquad\text{and}\qquad \lim_{z\to -\infty}F_x(z)<1\,.
\]
\end{example}
Note however that for every bounded relatively measurable sequence $x$
\[ 
\lim_{z\to -\infty}F_x(z)=0 \qquad\text{and}\qquad \lim_{z\to -\infty}F_x(z)=1\,.
\]

Next we introduce and study the notion of an average of a relatively measurable sequence. 

\begin{df}[Relative average]
Let $x:\N\to\R$ be a relatively measurable sequence. Then we define the {\em relative average} of $x$ by
\[
M(x) := \lim_{N\to\infty} \frac{1}{N}\sum_{n=1}^N x_n\,,
\]
whenever this limit exists.
\end{df}

The following theorem shows that the relative average of a relatively
measurable and bounded sequence can be written in terms of a Stieltjes integral
with respect to  the relative distribution function.

\begin{thm}
Let $x:\N\to\R$ be a relatively measurable and bounded sequence. Then
$M(x)$ exists and 
\begin{equation}\label{eq:stieltjes}
M(x) = \int_{-\infty}^\infty z \,\dint F_x(z)\,.
\end{equation}
\end{thm}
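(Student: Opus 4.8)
The plan is to approximate the bounded sequence $x$ from below and above by sequences taking only finitely many values, for which the relative average can be read off directly from the relative measures of the level sets, and then to recognise the resulting expressions as lower and upper Riemann--Stieltjes sums that squeeze down to $\int_{-\infty}^\infty z\,\dint F_x(z)$.

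Since $x$ is bounded, all its values lie in some interval $[-C,C]$, and hence $F_x(z)=0$ for $z<-C$ and $F_x(z)=1$ for $z\ge C$; in particular the integral in \eqref{eq:stieltjes} is effectively taken over $[-C,C]$. First I would fix a partition $t_0<t_1<\dots<t_m$ with $t_0<-C<C<t_m$ and mesh at most $\delta$, and introduce the two step sequences $\underline{x}_n:=t_{j-1}$ and $\overline{x}_n:=t_j$ whenever $x_n\in[t_{j-1},t_j)$. By construction $\underline{x}_n\le x_n\le\overline{x}_n$ with $x_n-\underline{x}_n\le\delta$ and $\overline{x}_n-x_n\le\delta$, so that $\big|\frac1N\sum_{n=1}^N x_n-\frac1N\sum_{n=1}^N\underline{x}_n\big|\le\delta$ uniformly in $N$, and likewise for $\overline x$.

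Next I would compute the averages of the step sequences. Because $F_x$ is monotone it has at most countably many discontinuities, so I may take the partition points to be continuity points of $F_x$; then the atoms of the distribution are not split, and $p_j:=\mu_R\big(x^{-1}([t_{j-1},t_j))\big)=F_x(t_j)-F_x(t_{j-1})$, each of which exists because $x$ is relatively measurable. Since $\frac1N\sum_{n=1}^N\underline{x}_n=\sum_{j=1}^m t_{j-1}\frac{|\{n\le N:x_n\in[t_{j-1},t_j)\}|}{N}$ is a finite linear combination of convergent ratios, letting $N\to\infty$ yields $\sum_{j=1}^m t_{j-1}p_j$, and likewise the average of $\overline x$ tends to $\sum_{j=1}^m t_j p_j$. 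These are exactly the lower and upper Riemann--Stieltjes sums of the continuous integrand $z\mapsto z$ against the monotone integrator $F_x$, and, since $\sum_{j=1}^m p_j=F_x(t_m)-F_x(t_0)=1$, they differ by $\sum_{j=1}^m (t_j-t_{j-1})p_j\le\delta$.

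Combining the two estimates, for each fixed admissible partition I obtain $\sum_{j=1}^m t_{j-1}p_j\le\liminf_{N}\frac1N\sum_{n=1}^N x_n\le\limsup_{N}\frac1N\sum_{n=1}^N x_n\le\sum_{j=1}^m t_j p_j$, with the two outer quantities at distance at most $\delta$. Because the integrand is continuous, the Riemann--Stieltjes integral $\int_{-\infty}^\infty z\,\dint F_x(z)$ exists, and both the lower and the upper sums converge to it as $\delta\to0$. Hence $\liminf_N$ and $\limsup_N$ coincide and equal the integral, which proves simultaneously that $M(x)$ exists and that \eqref{eq:stieltjes} holds. The one point that requires care is the order of the two limits: one must send $N\to\infty$ first, for a fixed partition, and only afterwards refine the partition, and it is precisely the uniform-in-$N$ bound $\delta$ that makes this interchange legitimate.
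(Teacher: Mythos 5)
Your proposal is correct and follows essentially the same route as the paper's proof: decompose the sequence over a partition into level sets of intervals, use relative measurability to pass to the limit in $N$ term by term, and sandwich $\liminf$ and $\limsup$ of the averages between lower and upper Riemann--Stieltjes sums of $z\mapsto z$ against $F_x$. The only cosmetic difference is that the paper uses half-open intervals $(t_{k-1},t_k]$ matching the definition of $F_x$ (so no continuity-point selection is needed) and controls $O-U<\varepsilon$ via existence of the Stieltjes integral rather than via the mesh bound $\delta$.
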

\begin{proof}
In this proof we simply write $F$ instead of $F_x$. By assumption there exists some $K\in (0,\infty)$ such that $-K+1\le x_n\le K$
for every $n\in\N$.  The Stieltjes integral exists since the function
$\id\colon [-K,K]\to \R$, $z\mapsto z$ is continuous and $F$ is monotone 
on $[-K,K]$ and constant on the
intervals $(-\infty,-K]$ and $[K,\infty)$.
Therefore, given $\varepsilon>0$ there exists a decomposition
$Z=\{-K=t_0<t_1<\ldots<t_m=K\}$ of $[-K,K]$ such that
$O(\id,F,Z)-U(\id,F,Z)<\varepsilon$, where $U$ and $O$ denote upper and lower
Riemann-Stieltjes sums, i.e., 
\begin{align*}
U(\id,F,Z)&=\sum_{k=1}^m t_{k-1}\big(F(t_k)-F(t_{k-1})\big)
\quad\text{ and }
&O(\id,F,Z) =\sum_{k=1}^m t_k\big(F(t_k)-F(t_{k-1})\big)\,.
\end{align*}
We observe that
\begin{align*}
\limsup _{N\to\infty} \frac{1}{N}\sum_{n=1}^N x_n
&= \sum_{k=1}^m \limsup_{N\to\infty} \frac{1}{N}\sum_{n=1}^N1_{(t_{k-1},t_k]}(x_n)x_n
\le  \sum_{k=1}^m \limsup_{N\to\infty} \frac{1}{N}\sum_{n=1}^N1_{(t_{k-1},t_k]}(x_n)t_k\\
&\le  \sum_{k=1}^m t_k\big(F(t_k)-F(t_{k-1})\big)=O(\id,F,Z)\,.
\end{align*}
Similarly one can show that $\liminf_{N\to\infty}\frac{1}{N}\sum_{n=1}^N x_n\ge U(\id,F,Z)$, which then proves the assertion.
\end{proof}

It follows from the properties of Riemann-Stieltjes integrals that
\begin{equation}\label{eq:stieltjes1}
M(x) = \int_{-\infty}^\infty z F_x'(z)\,\dint z\,,
\end{equation}
whenever $F_x$ is differentiable on $\R$ with $F'_x=f_x$ outside some
at most finite subset of $\R$. 

\begin{rem}
We see that measurable sequences behave in many ways like random variables, and indeed a measurable sequence can be taken as a mathematical model for a ``random number''. As noted before, this kind of model has been put forward by Austrian mathematician Richard von Mises in the first half of the 20th century. 
This model was --- at least among the vast majority of probabilists --- replaced by Kolmogorov's approach, mainly because of the potent tools from Lebesgue's measure theory and the accompanied clean and simple concepts and theorems of convergence.

Nevertheless there is a certain appeal to the alternative, in particular its sleek theoretical foundation. Within this approach one can simply state that a {\em real} number is a Cauchy sequence of rational numbers and a {\em random} number is a relatively measurable sequence of
rational (or real) numbers. 
\end{rem}

\medskip
We now assign to every  $\Z$-valued and relatively measurable sequence $x$ 
a function
$\rho_x\colon \Z\to [0,1]$ via
\[
\rho_x(k):=\mu_R\big(\{n\in\N\colon x_n=k\}\big)\,.
\]
Then for bounded, $\Z$-valued and relatively measurable sequences we have 
$\sum_{k\in\Z} \rho_x(k)=1$ and  the well-known convolution formula:

\begin{proposition}
Let $x,y:\N\to\R$ be bounded and relatively measurable sequences taking values in $\Z$.
If $x$ and $y$ are $\mu_R$-independent, then $\rho_{x+y}=\rho_x\ast \rho_y$,
where
\[
\rho_x\ast \rho_y(k):=\sum_{j\in\Z} \rho_x(j)\rho_y(k-j)\,,\quad k\in\Z\,.
\]
\end{proposition}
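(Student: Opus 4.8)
The plan is to fix $k\in\Z$ and to decompose the level set $\{n\in\N:x_n+y_n=k\}$ according to the value taken by $x$. First I would observe that for each $n$ the equation $x_n+y_n=k$ holds if and only if $x_n=j$ and $y_n=k-j$ for the (unique) integer $j=x_n$. This yields the disjoint decomposition
\[
\{n\in\N: x_n+y_n=k\} = \bigsqcup_{j\in\Z}\big(\{n: x_n=j\}\cap\{n: y_n=k-j\}\big).
\]

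The crucial step is to make this decomposition \emph{finite}, and this is exactly where boundedness enters. Since $x$ is bounded and $\Z$-valued, its range is a finite subset of $\Z$, so $\{n:x_n=j\}=\emptyset$ for all but finitely many $j$; the same holds for $y$. Hence the union above is a finite disjoint union. I emphasize that this finiteness cannot be dispensed with, because $\mu_R$ is only finitely additive (as the example $\mu_R(\bigcup_i\{i\})\neq\sum_i\mu_R(\{i\})$ shows), so we are allowed to sum measures only over finitely many pieces. Now each set $\{n:x_n=j\}=x^{-1}(\{j\})$ is relatively measurable because $x$ is a relatively measurable sequence and singletons are intervals, and likewise $\{n:y_n=k-j\}=y^{-1}(\{k-j\})$ is relatively measurable; since the relatively measurable sets form an algebra, the intersections and the finite union are relatively measurable, and finite additivity of $\mu_R$ gives
\[
\rho_{x+y}(k)=\mu_R\Big(\{n:x_n+y_n=k\}\Big)=\sum_{j\in\Z}\mu_R\big(x^{-1}(\{j\})\cap y^{-1}(\{k-j\})\big),
\]
where the sum has only finitely many nonzero terms.

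Finally I would apply $\mu_R$-independence of $x$ and $y$ to the intervals $I=\{j\}$ and $J=\{k-j\}$, which factors each summand:
\[
\mu_R\big(x^{-1}(\{j\})\cap y^{-1}(\{k-j\})\big)=\mu_R\big(x^{-1}(\{j\})\big)\,\mu_R\big(y^{-1}(\{k-j\})\big)=\rho_x(j)\,\rho_y(k-j).
\]
Summing over $j\in\Z$ then produces precisely $\rho_x\ast\rho_y(k)$, which establishes $\rho_{x+y}=\rho_x\ast\rho_y$.

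The main obstacle is not any computation but the bookkeeping around additivity: because $\mu_R$ fails to be $\sigma$-additive, one must certify that the decomposition of the level set is genuinely finite before summing measures and invoking independence. The boundedness hypothesis is therefore indispensable, and it is the only place where it is used.
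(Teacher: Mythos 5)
Your proof is correct, and since the paper states this proposition without supplying a proof, your argument---decomposing the level set $\{n : x_n+y_n=k\}$ into the finitely many nonempty pieces $x^{-1}(\{j\})\cap y^{-1}(\{k-j\})$, using finite additivity, and then applying $\mu_R$-independence to the singleton intervals---is exactly the intended one, with the role of boundedness correctly identified. One small refinement: the relative measurability of each intersection $x^{-1}(\{j\})\cap y^{-1}(\{k-j\})$ is most safely read off from the $\mu_R$-independence hypothesis itself (whose defining identity presupposes that these intersections have a relative measure), since closure of the relatively measurable sets under \emph{non-disjoint} intersections is more delicate than the algebra claim suggests, whereas the disjoint unions you take afterwards are unproblematic.
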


All in all, we can say that relatively measurable sequences behave in many ways like random variables. For instance, the indicator functions of the sets $B_j$ introduced after Remark \ref{rem:independence and information} form an independent, relatively measurable, bounded, and $\Z$-valued sequence. Therefore, their sums satisfy
\[
\rho_{1_{B_1}+\ldots+1_{B_m}}(k)={m \choose k}2^{-m}\,,\quad k\in\Z\,.
\]
This means that the partial sums of the indicator functions of the sets
$B_j$ satisfy the central limit theorem of de Moivre (Theorem \ref{clt:de moivre}), i.e., for any $a,b\in\R$ with $a<b$,
\begin{align}\label{clt:ziffernentwicklung}
\begin{split}
\lefteqn{\lim_{m \to \infty} 
\mu_R\bigg(\bigg\{n\in \N\colon a \leq \frac{\sum_{j=1}^m1_{B_j}(n) - \frac{m}{2}}{\sqrt{\frac{m}{4}}}
\leq b\bigg\}\bigg) } \\
&=\lim_{m \to \infty}\sum_{k=0}^m{m \choose k}2^{-m}1_{[a,b]}\Big(\frac{k - \frac{m}{2}}{\sqrt{\frac{m}{4}}}\Big)
= \frac{1}{\sqrt{2\pi}}\int_a^b e^{-\frac{x^2}{2}}\,\dint x\,.
\end{split}
\end{align}
Note again that the set considered above is indeed relatively measurable. To see this, we note that, as was argued before, the sets $B_j$ are all relatively measurable and hence, because the collection of relatively measurable sets forms an algebra, so are their complements $B_j^c$. This immediately implies that $(1_{B_j}(n))_{n\in\N}$ and their finite sums are relatively measurable. 

Thus, for the binary expansion of natural numbers we have the same central limit theorem as for the binary expansion of real numbers in $[0,1)$. 
In fact, we can now formulate a quite interesting version of this, which can 
be found, for example, in \cite{DG1998}. Contrary to almost all
numbers in $[0,1)$, every natural number has a finite expansion and hence it is reasonable to define for $n\in\N$ its \sod{}  with respect to the binary expansion,
\[s_2(n):=\sum_{j=1}^{\lfloor\log_2(n)\rfloor+1} 1_{B_j}(n)=\sum_{j=1}^\infty
1_{B_j}(n) \,,\quad n\in\N\,.
\]
The following result describes the Gaussian fluctuations of the \sod{} .
\begin{thm}[Central limit theorem for the \sod{}]\label{th:clt-s2n}
For all $b\in\R$, we have
\[
\mu_R\Big(\Big\{n\in\N\colon s_2(n)\le b\sqrt{\tfrac{1}{4}\log_2(n)}+\tfrac{1}{2}\log_2(n)\Big\}\Big) = \frac{1}{\sqrt{2\pi}}\int_{-\infty}^b e^{-\frac{x^2}{2}}\,\dint x\,.
\]
\end{thm}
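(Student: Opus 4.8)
The plan is to exploit the dyadic block structure of $\N$ together with the binomial distribution of digit counts that is already encoded in \eqref{clt:ziffernentwicklung}. Write $\Phi(b):=\frac{1}{\sqrt{2\pi}}\int_{-\infty}^b e^{-x^2/2}\dint x$ and set $T_b(n):=b\sqrt{\tfrac14\log_2 n}+\tfrac12\log_2 n$, so that the set in question is $A_b=\{n\in\N:s_2(n)\le T_b(n)\}$. I would partition $\N$ into the dyadic blocks $I_m:=\{n\in\N:2^m\le n<2^{m+1}\}$, each of cardinality $2^m$. Every $n\in I_m$ has exactly $m+1$ binary digits, the leading one being $1$, so $s_2(n)=1+\sum_{j=1}^m 1_{B_j}(n)$, and as $n$ runs through $I_m$ the lower part takes each value $k$ precisely $\binom{m}{k}$ times. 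Moreover $\log_2 n\in[m,m+1)$ throughout $I_m$, so $T_b(n)=\tfrac m2+\tfrac b2\sqrt m+c_m(n)$ with a uniformly bounded remainder $|c_m(n)|\le C$ for all large $m$.

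First I would prove a block density estimate: with $p_m(b):=2^{-m}\,|A_b\cap I_m|$, one has $p_m(b)\to\Phi(b)$ as $m\to\infty$. The condition $s_2(n)\le T_b(n)$ reads $\sum_{j=1}^m 1_{B_j}(n)\le \tfrac m2+\tfrac b2\sqrt m + c_m(n)-1$, and since $|c_m(n)-1|$ is uniformly bounded on $I_m$, I would sandwich $|A_b\cap I_m|$ between the two counts obtained by replacing the variable threshold by the fixed thresholds $\tfrac m2+\tfrac b2\sqrt m\pm(C+1)$. Dividing by $2^m$ turns both bounds into binomial tail sums $2^{-m}\sum_{k\le \frac m2+\frac b2\sqrt m\pm(C+1)}\binom mk$; after standardising by $\sqrt m/2$ the shifts $\pm(C+1)$ become $O(1/\sqrt m)$, so both bounds converge to $\Phi(b)$ by de Moivre's theorem (Theorem \ref{clt:de moivre}) and the continuity of $\Phi$, equivalently by the computation behind \eqref{clt:ziffernentwicklung} combined with the periodicity Lemma \ref{lemma:periodisch}. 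This yields $p_m(b)\to\Phi(b)$.

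Next I would assemble the relative measure. For $N$ with $2^M\le N<2^{M+1}$,
\[
|A_b\cap\{1,\dots,N\}|=\sum_{m=0}^{M-1}|A_b\cap I_m|+|A_b\cap\{2^M,\dots,N\}|=\sum_{m=0}^{M-1}2^m p_m(b)+r_N,
\]
where $0\le r_N\le N-2^M+1$. Because the weights $2^m$ grow geometrically, the mean $2^{-M}\sum_{m=0}^{M-1}2^m p_m(b)$ is a Cesàro-type average that converges to $\Phi(b)$: given $\eps>0$, choose $m_0$ with $|p_m(b)-\Phi(b)|<\eps$ for $m\ge m_0$; the blocks below $m_0$ contribute $O(2^{m_0}/2^M)\to0$, while the remaining weights $2^{-M}\sum_{m=m_0}^{M-1}2^m$ are at most $1$ in total, leaving an error at most $\eps$. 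Dividing the whole display by $N\in[2^M,2^{M+1})$ then gives $\lim_N|A_b\cap\{1,\dots,N\}|/N=\Phi(b)$, provided the partial-block term is shown to satisfy $r_N=\Phi(b)\,(N-2^M)+o(N)$.

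The main obstacle is precisely this last, partial block: over the initial segment $\{2^M,\dots,N\}$, i.e. over $r\in\{0,\dots,N-2^M\}$ in the lower $M$ bits, the digit count $\sum_{j=1}^M 1_{B_j}$ is \emph{not} binomially distributed, so de Moivre does not apply directly. I would dispose of it in two regimes. If $N-2^M=o(2^M)$, then the partial block is negligible relative to $N\ge 2^M$, so $r_N/N\to0$, which is consistent with the claim since then the full-block mean alone already tends to $\Phi(b)$. If $N-2^M$ is comparable to $2^M$, I would peel off the binary digits of $R:=N-2^M$ one at a time, decomposing $\{0,\dots,R\}$ into the $O(M)$ dyadic cylinders determined by the set bits of $R$; on each cylinder the free lower bits give a (shifted) binomial count, each handled by de Moivre, and a Delange-type summation of these contributions shows that $r_N=\Phi(b)\,(N-2^M)+o(N)$. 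Combining the full-block Cesàro limit with this partial-block estimate establishes both that $A_b$ is relatively measurable and that $\mu_R(A_b)=\Phi(b)$, which is the assertion.
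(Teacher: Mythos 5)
Your overall strategy is the same as the paper's: decompose $\{1,\dots,N\}$ into dyadic blocks, use the fact that on a full block $[2^m,2^{m+1})$ the lower digit sum is exactly binomial, absorb the drift of the threshold $b\sqrt{\tfrac14\log_2 n}+\tfrac12\log_2 n$ across a block into an $O(1)$ shift that washes out after standardisation, and then average the block densities. Where you genuinely diverge is the treatment of the final incomplete block $\{2^M,\dots,N\}$, which you rightly identify as the crux. The paper handles it by fixing a resolution $\ell$ with $2^{-\ell}<\varepsilon/3$, rounding $N$ down to the grid $2^M+j2^{M-\ell}$, and proving the density estimate uniformly over the $2^\ell$ grid-aligned partial blocks; this keeps both the number of sub-blocks and the digit-shift coming from the high bits bounded (by $2^\ell$ and $\ell+1$ respectively), at the cost of discarding a remainder of relative length $<\varepsilon/3$. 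Your Delange-style decomposition of $R=N-2^M$ into the exact binary cylinders instead produces up to $M$ cylinders with \emph{unbounded} digit-shifts $i-1$ and shrinking lengths $2^{e_i}$, so the assertion ``each handled by de Moivre'' is not quite enough as stated: you must add the tail estimate that cylinders beyond index $i_0$ have total length at most $2\cdot 2^{M-i_0}$, and that on the first $i_0$ cylinders the standardised threshold is $b+O(i_0/\sqrt{M})$, which requires either uniformity in $b$ of the de Moivre convergence (the paper's Lemma~\ref{lem:uniform-conv}, i.e.\ P\'olya's theorem) or the continuity-plus-monotonicity argument you allude to. Relatedly, the dichotomy ``$N-2^M=o(2^M)$ versus comparable to $2^M$'' cannot be used to split a single limit over $N$; it should be replaced by one estimate uniform in the position of $N$ inside $[2^M,2^{M+1})$, which is exactly what both the grid trick and a properly quantified cylinder argument deliver. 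With those repairs your route is correct and arguably more natural (it computes the partial-block density exactly rather than discarding a sliver), while the paper's fixed-resolution grid buys bounded bookkeeping and a shorter uniformity argument.
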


We recall the following lemma from probability theory.

\begin{lemma}\label{lem:uniform-conv}
Let $F:\R\to [0,1]$ be a continuous cumulative distribution function and 
let $(F_n)_{n\in \N}$ be a sequence of non-decreasing functions 
$F_n\colon \R\to [0,1]$ with 
$\lim_{n\to\infty}F_n(x)=F(x)$ for all $x\in \R$.
Then $F_n\to F$ uniformly on $\R$.
\end{lemma}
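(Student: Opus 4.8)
The plan is to reduce the uniform statement to pointwise convergence at only finitely many points, exploiting the monotonicity of the $F_n$ together with the uniform continuity of the limit $F$; this is precisely the mechanism behind P\'olya's theorem.

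First I would fix $\eps>0$ and build a finite grid adapted to $F$. Since $F$ is continuous with $\lim_{x\to-\infty}F(x)=0$ and $\lim_{x\to+\infty}F(x)=1$, the intermediate value theorem lets me choose points $x_1<x_2<\dots<x_{K-1}$ at which $F$ attains the values $\eps,2\eps,\dots,1-\eps$ (assuming $1/\eps\in\N$, which is harmless). Then $F(x_1)=\eps$, $F(x_{K-1})=1-\eps$, and every consecutive increment $F(x_{j})-F(x_{j-1})$ equals $\eps$.

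Next, invoking the hypothesis at these finitely many points, I would pick $N$ so large that $|F_n(x_j)-F(x_j)|<\eps$ holds simultaneously for all $j\in\{1,\dots,K-1\}$ and all $n\ge N$. For an arbitrary $x$ lying in a bracket $[x_{j-1},x_j]$, the monotonicity of both $F_n$ and $F$ yields the sandwich
\[
F(x)-2\eps\le F_n(x)\le F(x)+2\eps\qquad(n\ge N),
\]
since $F_n(x)\le F_n(x_j)\le F(x_j)+\eps=F(x_{j-1})+2\eps\le F(x)+2\eps$, and symmetrically for the lower estimate.

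The step requiring the most care is the treatment of the two tails $x\le x_1$ and $x\ge x_{K-1}$, because the $F_n$ are assumed only to be non-decreasing and $[0,1]$-valued --- they need not themselves be distribution functions and so need not tend to $0$ or $1$ at $\mp\infty$. Here I would argue purely from boundedness and monotonicity: for $x\le x_1$ and $n\ge N$ one has $0\le F_n(x)\le F_n(x_1)<F(x_1)+\eps=2\eps$ while $0\le F(x)\le\eps$, so both values lie in $[0,2\eps)$; symmetrically, for $x\ge x_{K-1}$ both $F_n(x)$ and $F(x)$ lie in $(1-2\eps,1]$. Combining the three regimes gives $|F_n(x)-F(x)|\le 2\eps$ for every $x\in\R$ and every $n\ge N$, and since $\eps>0$ was arbitrary this establishes uniform convergence.
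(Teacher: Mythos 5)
Your argument is correct and complete: it is the standard proof of P\'olya's theorem, reducing uniform convergence to pointwise convergence on a finite grid where $F$ increases by exactly $\eps$, and handling the two tails using only the monotonicity and $[0,1]$-boundedness of the $F_n$ (the delicate point, since the $F_n$ need not be distribution functions). The paper itself states this lemma as a known fact from probability theory and gives no proof, so there is nothing to compare against; your write-up would serve as a valid proof of the statement as used in the paper.
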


%\begin{proof}
%Let $\varepsilon\in(0,\infty)$. There exists $K\in (0,\infty)$ such that 
%$\max(1-F(K),F(-K))<\tfrac{\varepsilon}{2}$.  Since $F$ is uniformly 
%continuous on $[-K,K]$, there exists $\delta\in(0,\infty)$ such that 
%\[\forall s,t\in[-K,K]\colon |s-t|<\delta\Rightarrow |F(s)-F(t)|<\tfrac{\varepsilon}{2}.\] Let
%$M\in\N$ with $\tfrac{2K}{M}<\delta$ and set $t_k :=-K+k\tfrac{2K}{M}$ for
%$k=0,\dots,M$. Now by assumption there exists $n_0\in\N$ such that for all 
%$n\ge n_0$ and all $k=0,\dots,M$ we have 
%$|F_n(t_k)-F(t_k)|<\tfrac{\varepsilon}{2}$. Let $t$ in $\R$. Suppose
%first, that $t\in (-K,K)$. Then there exists $k\in\{1,\dots,M\}$ such that
%$t_{k-1}<t\le t_k$. Now
%\begin{align*}
%F_n(t)-F(t) &\le F_n(t_k)-F(t_{k-1})<F_n(t_k)-F(t_{k})+\tfrac{\varepsilon}{2}<\varepsilon\\
%\text{and }\quad F(t)-F_n(t) &\le F(t_k)-F_n(t_{k-1})<F(t_{k-1})+\tfrac{\varepsilon}{2}-F_n(t_{k-1})<\varepsilon\,.
%\end{align*}
%For $t\ge K$ we have
%\begin{align*}
%F_n(t)-F(t) &\le 1-F(K)<\tfrac{\varepsilon}{2}<\varepsilon\\
%\text{and }\quad F(t)-F_n(t) &\le 1-F_n(K)<1-F(K)+\tfrac{\varepsilon}{2}<\varepsilon\,.
%\end{align*}
%In the same way we can show $|F(t)-F_{n}(t)|$ if $t\le -K$.
%\end{proof}

We are now able to prove the central limit theorem for the sum-of-digits function.

\begin{proof}[Proof of Theorem \ref{th:clt-s2n}]
Let $\varepsilon\in(0,\infty)$. For  $b\in\R$ let us write 
$\Phi(b):=\frac{1}{\sqrt{2\pi}}\int_{-\infty}^b e^{-\frac{x^2}{2}}\,\dint x$. 
It follows from de Moivre's central limit theorem (see Equation
\eqref{clt:ziffernentwicklung}) and Lemma \ref{lem:uniform-conv} that there exists
$m_0\in\N$ such that for all $m\geq m_0$ and every $b\in \R$, \[ 
-\frac{\varepsilon}{6}<\mu_R\bigg(\bigg\{n\in \N\colon \frac{\sum_{j=1}^m1_{B_j}(n) - \frac{m}{2}}{\sqrt{\frac{m}{4}}}
\leq b\bigg\}\bigg) -\Phi(b)<\frac{\varepsilon}{6}\,.
\]
Moreover, for each $m\ge m_0$, we have 
\[
\left|\bigg\{0\le n< 2^m\colon\sum_{j=1}^{m}s_2(n)=k\bigg\}\right|=\left|\bigg\{0\le n< 2^m\colon\sum_{j=1}^{m}1_{B_j}(n)=k\bigg\}\right|={m\choose k}
\]
and therefore, 
\[
2^{-m}\Big|\Big\{0\le n< 2^m\colon s_2(n)\le b\sqrt{\tfrac{1}{4}m}+\tfrac{1}{2}m \Big\}\Big|\in (\Phi(b)-\tfrac{\varepsilon}{6},\Phi(b)+\tfrac{\varepsilon}{6})\,.
\]
Now let $\ell\in \N$ with $2^{-\ell}<\tfrac{\varepsilon}{3}$ and $j\in \{1,\ldots,2^\ell\}$. For every $m\ge \ell+m_0$,
\begin{align*}
\lefteqn{\tfrac{1}{j 2^{m-\ell}}\Big|\Big\{2^m\le n< 2^m+j 2^{m-\ell}\colon s_2(n)\le b\sqrt{\tfrac{1}{4}\log_2(n)}+\tfrac{1}{2}\log_2(n)\Big\}\Big|}\\
&\ge \tfrac{1}{j 2^{m-\ell}}\Big|\Big\{2^m\le n< 2^m+j 2^{m-\ell}\colon s_2(n)\le b\sqrt{\tfrac{1}{4}m}+\tfrac{1}{2}m\Big\}\Big|\\
&=\sum_{i=1}^j\tfrac{2^{m-\ell}}{j 2^{m-\ell}}2^{-(m-\ell)}\Big|\Big\{0\le n< 2^{m-\ell}\colon s_2(n)\le b\sqrt{\tfrac{m}{4}}+\tfrac{m}{2}-s_2(i)\Big\}\Big|\,.
\end{align*}
Since $m-\ell\ge m_0$,
\begin{align*}
\lefteqn{2^{-(m-\ell)}\Big|\Big\{0\le n< 2^{m-\ell}\colon s_2(n)\le b\sqrt{\tfrac{m}{4}}+\tfrac{m}{2}-s_2(i)\Big\}\Big|}\\
&\ge \Phi\Big(b\sqrt{\tfrac{m}{m-\ell}}+\tfrac{\ell-2s_2(i)}{\sqrt{m-\ell}}\Big)-\tfrac{\varepsilon}{6}
\ge \Phi\Big(b\sqrt{\tfrac{m}{m-\ell}}-\tfrac{\ell}{\sqrt{m-\ell}}\Big)-\tfrac{\varepsilon}{6}
\,.
\end{align*}
Therefore,
\begin{align*}
&\tfrac{1}{j 2^{m-\ell}}\Big|\Big\{2^m\le n< 2^m+j 2^{m-\ell}\colon s_2(n)\le b\sqrt{\tfrac{1}{4}\log_2(n)}+\tfrac{1}{2}\log_2(n)\Big\}\Big|\\
&\ge \Phi\Big(b\sqrt{\tfrac{m}{m-\ell}}-\tfrac{\ell}{\sqrt{m-\ell}}\Big)-\tfrac{\varepsilon}{6} \,,
\end{align*}
and in the same way,
\begin{align*}
\lefteqn{\tfrac{1}{j 2^{m-\ell}}\Big|\Big\{2^m\le n< 2^m+j 2^{m-\ell}\colon s_2(n)\le b\sqrt{\tfrac{1}{4}\log_2(n)}+\tfrac{1}{2}\log_2(n)\Big\}\Big|}\\
&=\tfrac{1}{j 2^{m-\ell}}\Big|\Big\{2^m\le n< 2^m+j 2^{m-\ell}\colon s_2(n)\le b\sqrt{\tfrac{1}{4}(m+1)}+\tfrac{1}{2}(m+1)\Big\}\Big|\\
&\le \Phi\Big(b\sqrt{\tfrac{m+1}{m-\ell}}+\tfrac{1+\ell}{\sqrt{m-\ell}}\Big)+\tfrac{\varepsilon}{6} \,.
\end{align*}
Now for fixed $b\in \R$ there exists $m_1\in \N$ with $m_1\ge m_0+\ell$ such that for all $m\ge m_1$
\begin{align*}
\Phi(b)-\tfrac{\varepsilon}{3}<\tfrac{1}{j 2^{m-\ell}}\Big|\Big\{2^m\le n< 2^m+j 2^{m-\ell}\colon s_2(n)\le b\sqrt{\tfrac{1}{4}\log_2(n)}+\tfrac{1}{2}\log_2(n)\Big\}\Big|
&<\Phi(b)+\tfrac{\varepsilon}{3} \,.
\end{align*}
Note that this equation holds in particular for $j=2^\ell$, so that 
\begin{align*}
\Phi(b)-\tfrac{\varepsilon}{3}<\tfrac{1}{2^{m}}\Big|\Big\{2^m\le n< 2^{m+1}\colon s_2(n)\le b\sqrt{\tfrac{1}{4}\log_2(n)}+\tfrac{1}{2}\log_2(n)\Big\}\Big|
&<\Phi(b)+\tfrac{\varepsilon}{3} \,.
\end{align*}
Now let $N>2^{m_1}\tfrac{3}{\varepsilon}$, and let $m=\lfloor \log_2(N)\rfloor$.
Then $2^m+(j-1)2^{m-\ell}\le N<2^m+j2^{m-\ell}$ for some $j\in\{1,\dots,2^\ell\}$. Then,
\begin{align*}
\lefteqn{\tfrac{1}{N}\Big|\Big\{0\le n< N\colon s_2(n)\le b\sqrt{\tfrac{1}{4}\log_2(n)}+\tfrac{1}{2}\log_2(n)\Big\}\Big|}\\
&=
\tfrac{1}{N}\Big|\Big\{0\le n< 2^{m_1}\colon s_2(n)\le b\sqrt{\tfrac{1}{4}\log_2(n)}+\tfrac{1}{2}\log_2(n)\Big\}\Big|\\
&\quad+\sum_{k=m_1}^{m-1}\tfrac{2^k}{N}\tfrac{1}{2^k}\Big|\Big\{2^k\le n< 2^{k+1}\colon s_2(n)\le b\sqrt{\tfrac{1}{4}\log_2(n)}+\tfrac{1}{2}\log_2(n)\Big\}\Big|\\
&\quad+1_{\{j\ne -1\}}\tfrac{(j-1) 2^{m-\ell}}{N}\tfrac{1}{(j-1) 2^{m-\ell}}\Big|\Big\{2^m\le n< 2^m+(j-1) 2^{m-\ell}\colon s_2(n)\le b\sqrt{\tfrac{1}{4}\log_2(n)}+\tfrac{1}{2}\log_2(n)\Big\}\Big|\\
&\quad+\tfrac{1}{N}\Big|\Big\{2^m+(j-1)2^{m-\ell}\le n< N\colon s_2(n)\le b\sqrt{\tfrac{1}{4}\log_2(n)}+\tfrac{1}{2}\log_2(n)\Big\}\Big|\\
&\le \tfrac{\varepsilon}{3}
+\tfrac{1}{N}\sum_{k=0}^{m-1}2^k\big(\Phi(b)+\tfrac{\varepsilon}{3}\big)
+1_{\{j\ne -1\}}\tfrac{(j-1) 2^{m-\ell}}{N}\big(\Phi(b)+\tfrac{\varepsilon}{3}\big)+2^{m-\ell}\tfrac{1}{N}\\
&<\tfrac{\varepsilon}{3}+\tfrac{2^m+(j-1)2^{m-\ell}}{N}\big(\Phi(b)+\tfrac{\varepsilon}{3}\big)+\tfrac{\varepsilon}{3}\le \Phi(b)+\varepsilon\,,
\end{align*}
where we have used that since $2^{-\ell}<\tfrac{\varepsilon}{3}$, we also have 
$2^{m-\ell}\tfrac{1}{N}\le 2^{m-\ell}\tfrac{1}{2^m}<\tfrac{\varepsilon}{3}$.
In the same way we get
\begin{align*}
\lefteqn{\tfrac{1}{N}\Big|\Big\{0\le n< N\colon s_2(n)\le b\sqrt{\tfrac{1}{4}\log_2(n)}+\tfrac{1}{2}\log_2(n)\Big\}\Big|}\\
&\ge 
\tfrac{1}{N}\sum_{k=m_1}^{m-1}2^k\big(\Phi(b)-\tfrac{\varepsilon}{3}\big)
+1_{\{j\ne -1\}}\tfrac{(j-1) 2^{m-\ell}}{N}\big(\Phi(b)-\tfrac{\varepsilon}{3}\big)\\
&=\tfrac{2^m-2^{m_1}+(j-1)2^{m-\ell}}{N}\big(\Phi(b)-\tfrac{\varepsilon}{3}\big)
=\big(1-\tfrac{N-2^m+2^{m_1}-(j-1)2^{m-\ell}}{N})\big(\Phi(b)-\tfrac{\varepsilon}{3}\big)\\
&=\Phi(b)-\tfrac{2^{m_1}}{N}-\tfrac{\varepsilon}{3}-\tfrac{N-2^m-(j-1)2^{m-\ell}}{N}
>\Phi(b)-2\tfrac{\varepsilon}{3}-\tfrac{2^{m-\ell}}{N}
> \Phi(b)-\varepsilon\,, \end{align*}
which proves the result.
\end{proof}

\medskip
% % % % % % % % % % % % % % % % % % % % % % % % % % % % % % % % % % %
\subsection{Uniform distribution mod 1 and Weyl's theorem}
% % % % % % % % % % % % % % % % % % % % % % % % % % % % % % % % % % %

In this section we address a famous theorem of Hermann Weyl (9. November 1885 in Elmshorn; 8. December 1955 in Zürich). Before we start, let us remind the reader that the fractional part of a number $x\in\R$ is defined as 
\[
\{x \} :=
%\begin{cases}
x - \lfloor x \rfloor 
%& \,: x\geq 0\\
%x - \lceil x \rceil & \,: x<0\,,
%\end{cases} 
\]
where
\[
\lfloor x \rfloor:= \max\{k\in\mathbb Z\,:\, k\leq x \} 
%\qquad\text{und}\qquad \%lceil x \rceil := \min\{k\in\mathbb Z\,:\,k \geq x \}
\,.
\]
If we are given a sequence $x:\N\to\R$ and a set $B\subseteq [0,1)$, then we define another set by setting
\[
A_{x,B}:= \big\{n\in\N \,:\, \{x_n\}\in B \big\}\,.
\]
The sequence $x=(x_n)_{n\in\N}$ is said to be uniformly distributed modulo $1$ (we simply write mod 1) if and only if for all $a,b\in\R$ with $0\leq a < b\leq 1$, we have
\[
\mu_R\big(A_{x,[a,b)}\big) = b-a\,.
\]
In particular, this means that for each uniformly distributed sequence 
$(x_n)_{n\in\N}$ the sequence $(\{x_n\})_{n\in\N}$ is relatively measurable. 

Weyl's theorem  \cite{W1914, W1916}, also known as Weyl's criterion, says that a sequence $(x_n)_{n\in\N}$ of real numbers is uniformly distributed mod 1 if and only if for every $h\in\mathbb Z\setminus\{0\}$ the following condition is satisfied,
\[
\lim_{N\to\infty} \frac{1}{N}\sum_{n=1}^N e^{2\pi i h x_n} = 0\,.
\]
In an extended and multivariate version this theorem reads as follows.

\begin{thm}\label{th:weyl}
Let $m\in\N$ and consider sequences $x^1,\ldots,x^m:\N \to\R$ . Then the following are equivalent:
\begin{enumerate}
\item Every sequence $x^k$, $k\in\{ 1,\ldots,m\}$ is uniformly distributed mod 1 and
$\{x^1\},\ldots,\{x^m\}$ are $\mu_R$-independent;
\item For each $m$-tuple $(h_1,\ldots,h_m)\in \Z^m\setminus\{0\}$,
\[
\lim_{N\to\infty} \frac{1}{N}\sum_{n=1}^N e^{2\pi i (h_1 x^1_n+\ldots+h_m x^m_n)} = 0\,;
\]
\item For every continuous function $\psi\colon [0,1]^m\to\R$,
\[
\lim_{N\to\infty} \frac{1}{N}\sum_{n=1}^N \psi(\{x^1_n\},\ldots,\{x^m_n\})
=\int_{[0,1]^m} \psi(z_1,\ldots,z_m)\,\dint z_1\ldots\dint z_m\,;
\]
\item For every Riemann integrable function $\psi\colon [0,1]^m\to\R$, 
\[
\lim_{N\to\infty} \frac{1}{N}\sum_{n=1}^N \psi(\{x^1_n\},\ldots,\{x^m_n\})
=\int_{[0,1]^m} \psi(z_1,\ldots,z_m)\,\dint z_1\ldots\dint z_m\,.
\]\end{enumerate} 
\end{thm}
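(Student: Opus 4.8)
The plan is to prove the four statements equivalent by running through the cycle $(1)\Rightarrow(4)\Rightarrow(3)\Rightarrow(2)\Rightarrow(1)$. Two of the links are essentially free. For $(4)\Rightarrow(3)$ one only observes that every continuous function on $[0,1]^m$ is Riemann integrable, so the convergence asserted in $(4)$ specializes verbatim to $(3)$. For $(3)\Rightarrow(2)$ I would feed the continuous test functions $z\mapsto\cos(2\pi(h_1z_1+\dots+h_mz_m))$ and $z\mapsto\sin(2\pi(h_1z_1+\dots+h_mz_m))$ into $(3)$, use that $e^{2\pi i h_k\{x^k_n\}}=e^{2\pi i h_k x^k_n}$ because $h_k\in\Z$, and compute
\[
\int_{[0,1]^m}e^{2\pi i (h_1z_1+\dots+h_mz_m)}\,\dint z=\prod_{k=1}^m\int_0^1 e^{2\pi i h_k z_k}\,\dint z_k=0\qquad\text{whenever some }h_k\neq 0\,,
\]
which is exactly the vanishing required in $(2)$.

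For $(1)\Rightarrow(4)$ I would first turn joint independence into joint uniform distribution on the cube. For a half-open box $B=\prod_{k=1}^m[a_k,b_k)\subseteq[0,1)^m$, the definition of $\mu_R$-independence of $\{x^1\},\dots,\{x^m\}$ together with the uniform distribution of each coordinate gives
\[
\mu_R\Big(\big\{n\in\N:(\{x^1_n\},\dots,\{x^m_n\})\in B\big\}\Big)=\prod_{k=1}^m\mu_R\big((\{x^k\})^{-1}([a_k,b_k))\big)=\prod_{k=1}^m(b_k-a_k)=\vol(B)\,.
\]
Hence $(4)$ holds for box indicators $\ind_B$, and by linearity for all grid step functions. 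For an arbitrary Riemann integrable $\psi$ I would then invoke the Darboux criterion: given $\eps>0$, choose step functions $\varphi_-\le\psi\le\varphi_+$ with $\int(\varphi_+-\varphi_-)<\eps$, sandwich $\tfrac1N\sum_{n=1}^N\psi(\{x^1_n\},\dots,\{x^m_n\})$ between the corresponding averages, whose limits are $\int\varphi_\pm$, and let $\eps\to 0$.

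The substantive step is $(2)\Rightarrow(1)$, which I would route through trigonometric approximation. By linearity, $(2)$ says precisely that $\tfrac1N\sum_{n=1}^N P(\{x^1_n\},\dots,\{x^m_n\})\to\int_{[0,1]^m}P$ for every trigonometric polynomial $P(z)=\sum_{h}c_h e^{2\pi i (h_1z_1+\dots+h_mz_m)}$, since the $h=0$ term contributes the constant $c_0=\int_{[0,1]^m}P$ and every other term vanishes by hypothesis. As the trigonometric polynomials are dense in $C(\mathbb T^m)$ in the supremum norm by Stone--Weierstrass, a standard $3\eps$-argument extends this to every continuous $\Z^m$-periodic function. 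Approximating $\ind_B$ from above and below by continuous periodic functions whose integrals lie within $\eps$ of $\vol(B)$ then yields $\mu_R(\{n:(\{x^1_n\},\dots,\{x^m_n\})\in B\})=\vol(B)$; taking $B$ to be an interval in one coordinate and all of $[0,1)$ in the others gives uniform distribution of each $x^k$ (this marginal statement also follows at once from the scalar Weyl criterion applied to $h$ supported on a single coordinate), while the product form $\vol(B)=\prod_{k}(b_k-a_k)$ is exactly $\mu_R$-independence.

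I expect the main obstacle to be bookkeeping around periodicity rather than any deep difficulty. The test functions in $(3)$ live on the cube $[0,1]^m$ and need not agree on opposite faces, whereas trigonometric polynomials are periodic; this is harmless because the fractional parts never attain the value $1$ and the boundary hyperplanes carry no mass under the uniform measure, so every passage above is phrased through sandwiching by box indicators, with the boundary discrepancy absorbed into $\eps$. The one point that genuinely requires care is making these sandwich estimates uniform, so that a single $\eps$ controls $\tfrac1N\sum_{n=1}^N(\,\cdot\,)$ for all sufficiently large $N$ simultaneously.
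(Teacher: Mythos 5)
The paper does not actually prove Theorem~\ref{th:weyl}; it states the multivariate Weyl criterion as a known result and points to \cite{W1914,W1916} (and, for the continuous analogue, to \cite{KN1974}), so there is no in-paper argument to compare yours against. Your cycle $(1)\Rightarrow(4)\Rightarrow(3)\Rightarrow(2)\Rightarrow(1)$ is the standard textbook proof (essentially Kuipers--Niederreiter) and is correct: $(4)\Rightarrow(3)$ is immediate, $(3)\Rightarrow(2)$ via the real and imaginary parts of the exponential is fine, $(1)\Rightarrow(4)$ via joint equidistribution on half-open boxes plus Darboux sandwiching works, and $(2)\Rightarrow(1)$ via trigonometric polynomials, density in $C(\mathbb{T}^m)$, and sandwiching box indicators between continuous periodic functions is the classical route. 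Two small points deserve explicit mention rather than being "absorbed into $\eps$''. First, the paper defines an interval as \emph{any} convex subset of $\R$ (including closed intervals and singletons), and $\mu_R$-independence quantifies over all of them; so in $(2)\Rightarrow(1)$ you must also verify the product rule for closed, open, and degenerate intervals, which follows by one more sandwich between half-open boxes since the boundary contributes at most $O(\eps)$ to each factor. Second, part $(1)$ implicitly asserts relative measurability of all the sets involved (existence of the limits), so in $(2)\Rightarrow(1)$ your squeeze must be run so as to show the limit \emph{exists} and equals $\vol(B)$, not merely that it equals $\vol(B)$ if it exists; your $\limsup$/$\liminf$ phrasing does this, but it should be said. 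The uniformity worry you raise at the end is not a real obstacle: for each fixed $\eps$ one chooses the approximating pair once and then passes to $\limsup$ and $\liminf$, so no uniform-in-$N$ control is needed.
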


An important consequence is that for each 
$\alpha\in \R$ the sequence $(n\alpha)_{n\in \N}$ is uniformly distributed mod 1 if and only if $\alpha$ is irrational, and that for 
$\alpha_1,\ldots,\alpha_m\in \R$
the sequences $\{\alpha_1n\}_{n\in\N},\ldots,\{\alpha_mn\}_{n\in\N}$
are uniformly distributed mod 1 and $\mu_R$-independent if and only if
$1,\alpha_1,\ldots,\alpha_m$ are linearly independent over $\Q$.

\begin{rem}
Theorem \ref{th:weyl} is also of practical interest, as it provides us 
with a method for numerical integration of a 
Riemann integrable function $\psi$ on $[0,1]^m$. Note that,
if we only know that the coordinate sequences are uniformly distributed
mod 1 and $\mu_R$-independent, we cannot say anything about the speed
of convergence of the sums towards the integral. 

The concept of discrepany of a sequence measures the speed with which
a sequence in $[0,1)^m$ approaches the uniform distribution on
$[0,1)^m$. Sequences with a ``high'' speed of convergence are informally
called low-discrepancy sequences and give rise to a class of 
numerical integration algorithms called quasi-Monte Carlo methods.
For more information about these sequences and algorithms see
\cite{dick, DT1997, kuipersnied,leobacher}.
%For a sequence $(z_n)_{n\in\N}$ in $[0,1)^m$ we define the $N$-th discrepancy function
%$\Delta_N\colon[0,1]^m\to \R$ by
%\[
%\Delta_N(y):=\tfrac{1}{N}\Big|\Big\{1\le n\le N\colon z_n\in \prod_{k=1}^m[0,y_k)\Big\}\Big|
%-\prod_{k=1}^my_k\,,
%\]
%and the $N$-th star discrepancy $D^*_N$ by $\|\Delta_N\|_\infty$.
%
%It is obvious, that if $(z_n)_{n\in\N}$, is uniformly distributed and the
%coordinates are $\mu_R$ independent.
\end{rem}
%\gu{Habe jetzt ``finitely relatively measurable'' wieder in 
%``finitely measurable'' umbenannt, bzw ein macro $\backslash$frm
% angelegt.
%Gegen das relatively bin ich deswegen ein bisserl, weil die Eigenschaft 
%ja null mit dem relativen Maß zu tun hat.
%}
\begin{df}[Finitely measurable function]
We say that a function $g\colon I\to \R$ is {\em \frm{} } if and only if the pre-image of each interval $J\subset\R$ under $g$ can be written as the union of finitely many subintervals, i.e., there exists $k\in\N$ and subintervals $I_1,\ldots,I_k$ of $I$ such that
\[
g^{-1}(J)=I_1\cup\ldots\cup I_k\,.
\]
\end{df}

Examples of \frm{}  functions are the monotone functions and the functions $g$ with the following so-called Dirichlet property:
\vskip 1mm 
A function $g\colon[a,b]\to \R$ is said to have the  Dirichlet property 
if and only if  it is continuous on $[a,b]$ and has only finitely many local extreme points.
\vskip 1mm
 
A concrete example of a \frm{}  function thus is 
$\cos(2\pi\cdot)\colon [0,1]\to \R$, $z\mapsto \cos(2\pi z)$.
%, when extended by $0$ outside of $[0,1]$.

\begin{proposition}\label{prop:map-indep}
Let $m\in\N$ and $x^1,\ldots,x^m:\N\to\R$ be sequences. Consider \frm{}  functions $g^1,\ldots,g^m\colon \R\to \R$. 
If $x^1,\ldots,x^m$ are relatively measurable and $\mu_R$-independent, then the sequences $g^1(x^1),\ldots,g^m(x^m)$ are relatively measurable and $\mu_R$-independent.
\end{proposition}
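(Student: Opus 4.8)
The plan is to exploit two structural facts that are already in place: that finite measurability of $g^k$ turns the pre-image of any interval into a \emph{finite union of intervals}, and that the relatively measurable subsets of $\N$ form an algebra on which $\mu_R$ is finitely additive. Together these reduce every statement about $g^k(x^k)$ to a statement about $x^k$ on finitely many intervals, where the hypotheses on the $x^k$ apply directly. No appeal to $\sigma$-additivity — which $\mu_R$ does not possess — will be needed anywhere.

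First I would establish relative measurability of each $g^k(x^k)$. Fix an interval $J\subseteq\R$. Since $g^k$ is finitely measurable, we may write $(g^k)^{-1}(J)=I_1\cup\ldots\cup I_r$ as a finite union of intervals, whence
\[
\big(g^k(x^k)\big)^{-1}(J)=\big\{n\in\N:x^k_n\in (g^k)^{-1}(J)\big\}=\bigcup_{s=1}^r (x^k)^{-1}(I_s)\,.
\]
Each $(x^k)^{-1}(I_s)$ is relatively measurable because $x^k$ is a relatively measurable sequence, and a finite union of relatively measurable sets is again relatively measurable since these sets form an algebra. Hence $\big(g^k(x^k)\big)^{-1}(J)$ is relatively measurable, which is exactly what it means for $g^k(x^k)$ to be a relatively measurable sequence.

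For independence I would fix intervals $J_1,\ldots,J_m$ and, for each $k$, write $(g^k)^{-1}(J_k)$ as a finite union of intervals. Here I would insert the one genuinely careful step: refine these into \emph{pairwise disjoint} intervals $I^k_1,\ldots,I^k_{r_k}$, which is always possible for a finite union of intervals. Writing $\big(g^k(x^k)\big)^{-1}(J_k)=\bigcup_{s}(x^k)^{-1}(I^k_s)$ and applying distributivity, the intersection over $k$ becomes a union over tuples $(s_1,\ldots,s_m)$ of the sets $\bigcap_{k}(x^k)^{-1}(I^k_{s_k})$; disjointness of the $I^k_s$ for each fixed $k$ makes these sets pairwise disjoint, so finite additivity lets me pass to a sum. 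Applying the $\mu_R$-independence of $x^1,\ldots,x^m$ to each term turns it into a product, and factoring the resulting sum of products back into a product of sums — again using additivity over the disjoint $I^k_s$ to recognize each factor as $\mu_R\big((g^k(x^k))^{-1}(J_k)\big)$ — yields the desired product rule.

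The argument is essentially bookkeeping, so I do not expect a substantial obstacle; the one point that must not be glossed over is the disjointification of the finite unions of intervals. Without it, finite additivity would not convert the measures of the unions into sums cleanly, and the distributive expansion would produce overlapping terms whose measures cannot simply be added. Once the intervals are taken disjoint, the passage from $\mu_R\big(\bigcap_k\bigcup_s\cdots\big)$ to $\prod_k\sum_s\cdots$ is just the standard expansion of a product of finite sums, and the proof closes.
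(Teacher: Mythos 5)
The paper leaves this proof to the reader, so there is no in-text argument to compare against; your proposal supplies exactly the intended bookkeeping. Both halves are correct: measurability follows since the relatively measurable subsets of $\N$ form an algebra, and for independence your disjointification of each finite union of intervals (valid because such a union has finitely many connected components, each again an interval) is precisely the step that lets finite additivity convert the distributive expansion into a sum of products and then back into a product of sums. No gaps.
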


The previous result, whose proof is left to the reader,
 has the following interesting corollary.

\begin{cor}
Let $1,\alpha_1,\dots,\alpha_m\in\R$ be linearly independent over $\Q$.
Then the sequences $\big(\cos(2\pi \alpha_1 n)\big)_{n\in\N},\ldots,\big(\cos(2\pi \alpha_m n)\big)_{n\in\N}$ are relatively measurable and $\mu_R$-independent.
\end{cor}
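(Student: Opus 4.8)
The plan is to combine the two tools that are already in place: the characterization of $\mu_R$-independence and uniform distribution via Weyl's criterion (Theorem~\ref{th:weyl}), and the stability of relative measurability and $\mu_R$-independence under \frm{} functions (Proposition~\ref{prop:map-indep}). First I would invoke the important consequence of Weyl's theorem stated immediately after Theorem~\ref{th:weyl}: since $1,\alpha_1,\dots,\alpha_m$ are linearly independent over $\Q$, the sequences $(\alpha_1 n)_{n\in\N},\dots,(\alpha_m n)_{n\in\N}$ are uniformly distributed mod $1$ and their fractional parts $\{\alpha_1 n\},\dots,\{\alpha_m n\}$ are $\mu_R$-independent. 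In particular each sequence $x^k:=(\alpha_k n)_{n\in\N}$ is relatively measurable (uniform distribution mod $1$ implies that $(\{x^k_n\})_{n\in\N}$ is relatively measurable), and the tuple is $\mu_R$-independent.

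Next I would record that the function $\cos(2\pi\,\cdot)\colon\R\to\R$ is \frm{}. This is already asserted in the excerpt as a concrete example: it is continuous on each period and has only finitely many local extrema on any bounded interval (the Dirichlet property), hence the preimage of any interval meets a bounded window in finitely many subintervals. One must be slightly careful that the relevant notion of \frm{} is the one for functions on all of $\R$ used in Proposition~\ref{prop:map-indep} rather than on a compact interval; since $\cos(2\pi z)$ is periodic, the preimage of any interval $J$ is a periodic union of finitely-many-per-period subintervals, which is exactly the structure needed so that composing with a relatively measurable sequence preserves relative measurability.

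With these two ingredients, the corollary is immediate from Proposition~\ref{prop:map-indep}. I would set $g^1=\dots=g^m=\cos(2\pi\,\cdot)$ and apply the proposition to the relatively measurable, $\mu_R$-independent sequences $x^1,\dots,x^m$ from the first step. The proposition then yields directly that $g^1(x^1)=(\cos(2\pi\alpha_1 n))_{n\in\N},\dots,g^m(x^m)=(\cos(2\pi\alpha_m n))_{n\in\N}$ are relatively measurable and $\mu_R$-independent, which is exactly the assertion.

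The step I expect to be the main (really the only) obstacle is verifying that $\cos(2\pi\,\cdot)$ genuinely satisfies the hypothesis of Proposition~\ref{prop:map-indep} as a function on $\R$, i.e.\ that it is \frm{} in the sense needed for the composition result to apply globally rather than on a single period. Once this is granted, everything else is a clean invocation of results already established in the excerpt, and there are no further calculations to perform. If one wished to be fully self-contained one could instead bypass Proposition~\ref{prop:map-indep} and argue directly from the equivalence of items (1) and (4) in Theorem~\ref{th:weyl}, approximating indicator functions of boxes in the $\cos$-coordinates by Riemann integrable functions of $(\{x^1_n\},\dots,\{x^m_n\})$; but the route through Proposition~\ref{prop:map-indep} is shorter and is the one the placement of the corollary clearly intends.
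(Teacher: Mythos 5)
Your proposal is correct and follows essentially the same route as the paper: invoke the consequence of Weyl's theorem for the fractional-part sequences and then apply Proposition~\ref{prop:map-indep} with $g^j=\cos(2\pi\,\cdot)$. The one subtlety you flag (whether $\cos(2\pi\,\cdot)$ is \frm{} on all of $\R$) is sidestepped in the paper by applying the proposition to the sequences $(\{\alpha_j n\})_{n\in\N}$, whose values lie in $[0,1)$ where $\cos(2\pi\,\cdot)$ has the Dirichlet property, and then using periodicity to identify $\cos(2\pi\{\alpha_j n\})=\cos(2\pi\alpha_j n)$.
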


\begin{proof}
We have already concluded, as a consequence of Weyl's theorem, that the sequences 
$\{\alpha_1n\}_{n\in\N},\ldots,\{\alpha_mn\}_{n\in\N}$ are uniformly distributed mod 1 and $\mu_R$-independent. Hence, by
Proposition \ref{prop:map-indep} the sequences 
 \[\big(\cos(2\pi\{\alpha_1n\})\big)_{n\in\N},\ldots,\big(\cos(2\pi\{\alpha_mn\})\big)_{n\in\N}\] are $\mu_R$-independent as well and thus the sequences
\[\big(\cos(2\pi\alpha_1n)\big)_{n\in\N},\ldots,\big(\cos(2\pi\alpha_mn)\big)_{n\in\N}\,.\]
\end{proof}

\begin{proposition}
Let $x,y:\N\to\R$ be bounded and relatively measurable sequences with continuousand increasing distribution functions
$F_x$ and $F_y$ respectively. If $x$ and $y$ are $\mu_R$-independent, then the distribution function
$F_{x+y}$ 
of $x+y$ is given by the convolution of $F_x$ and $F_y$, i.e., 
\[
F_{x+y}(z)=F_x*F_y(z)
=\int_{-\infty}^\infty F_x(z-\eta)\dint F_y(\eta)
=\int_{-\infty}^\infty F_y(z-\xi)\dint F_x(\xi)\,.
\] 
\end{proposition}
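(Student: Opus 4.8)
The plan is to imitate the classical proof that the distribution function of a sum of independent random variables equals the convolution of the individual distribution functions, while carefully circumventing the fact that $\mu_R$ is only finitely additive, so that neither a product relative measure nor a Fubini-type theorem is available. Since $x$ and $y$ are bounded, I would fix $K\in(0,\infty)$ with $x_n,y_n\in[-K,K]$ for all $n\in\N$; then $F_x$ and $F_y$ are constant outside $[-K,K]$ (in particular $F_y(-K)=0$, $F_y(K)=1$), and $x+y$ takes values in $[-2K,2K]$. Fixing $z\in\R$ and a partition $Z=\{-K=t_0<t_1<\dots<t_m=K\}$ of the $y$-range, I set $J_1:=[t_0,t_1]$ and $J_k:=(t_{k-1},t_k]$ for $k\ge 2$; since $F_y$ is continuous the distribution of $y$ has no atoms, so the precise treatment of the partition endpoints is immaterial and $\bigcup_k y^{-1}(J_k)=\N$.

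Next I would slice the target set $T:=\{n\in\N\colon x_n+y_n\le z\}$ according to the value of $y_n$ and use monotonicity on each slice. If $y_n\in J_k$ and $x_n\le z-t_k$, then $x_n+y_n\le z$; conversely, if $y_n\in J_k$ and $x_n+y_n\le z$, then $x_n\le z-y_n<z-t_{k-1}$. Hence each slice $T\cap y^{-1}(J_k)$ is sandwiched between $x^{-1}((-\infty,z-t_k])\cap y^{-1}(J_k)$ and $x^{-1}((-\infty,z-t_{k-1}])\cap y^{-1}(J_k)$. Taking the disjoint union over $k$ produces relatively measurable sets $L\subseteq T\subseteq U$, both of which are finite unions of intersections of preimages of intervals and hence lie in the algebra of relatively measurable sets. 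Applying finite additivity together with $\mu_R$-independence to each of the $m$ pieces, and using $\mu_R(y^{-1}(J_k))=F_y(t_k)-F_y(t_{k-1})$, I obtain
\[
\mu_R(L)=\sum_{k=1}^m F_x(z-t_k)\big(F_y(t_k)-F_y(t_{k-1})\big),\qquad
\mu_R(U)=\sum_{k=1}^m F_x(z-t_{k-1})\big(F_y(t_k)-F_y(t_{k-1})\big),
\]
which are precisely the lower and upper Riemann--Stieltjes sums of the continuous, decreasing integrand $\eta\mapsto F_x(z-\eta)$ against the monotone integrator $F_y$ over $Z$.

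I would then transfer the inclusion $L\subseteq T\subseteq U$ to the counting ratios $|\,\cdot\,\cap\{1,\dots,N\}|/N$ and pass to $\liminf$ and $\limsup$ in $N$. Since $L$ and $U$ are relatively measurable, this yields $\mu_R(L)\le \underline{\mu}_R(T)\le\overline{\mu}_R(T)\le\mu_R(U)$, where $\underline{\mu}_R,\overline{\mu}_R$ denote the lower and upper relative densities of $T$. Refining the partition so that its mesh $\delta$ tends to $0$ and using that $F_x$ is uniformly continuous on the compact interval $[-2K,2K]$, the gap $\mu_R(U)-\mu_R(L)=\sum_{k=1}^m\big(F_x(z-t_{k-1})-F_x(z-t_k)\big)\big(F_y(t_k)-F_y(t_{k-1})\big)$ is at most $\omega_{F_x}(\delta)\big(F_y(K)-F_y(-K)\big)=\omega_{F_x}(\delta)\to 0$, where $\omega_{F_x}$ is the modulus of continuity of $F_x$. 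Consequently both sums converge to the common value $\int_{-\infty}^\infty F_x(z-\eta)\dint F_y(\eta)$, the Riemann--Stieltjes integral exists, and the squeeze forces $\underline{\mu}_R(T)=\overline{\mu}_R(T)$. This simultaneously establishes that $x+y$ is relatively measurable and that $F_{x+y}(z)$ equals this integral.

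The main obstacle is exactly the lack of $\sigma$-additivity: one cannot build a joint two-dimensional relative measure for the pair $(x,y)$ and integrate against it, so the whole argument must be organized around \emph{finitely} many slices and a squeeze carried out at the level of densities, with the passage to the limit taking place in $\R$ rather than inside $\mu_R$. The hypotheses then play transparent roles: $\mu_R$-independence turns each slice into the product $F_x(z-t_k)\big(F_y(t_k)-F_y(t_{k-1})\big)$, boundedness confines everything to a compact interval where Riemann--Stieltjes theory applies, and continuity of $F_x$ is what makes the upper and lower sums collapse to a common limit. Finally, the second representation $\int_{-\infty}^\infty F_y(z-\xi)\dint F_x(\xi)$ follows at once by interchanging the roles of $x$ and $y$ (equivalently, by integration by parts for Riemann--Stieltjes integrals, using the continuity of $F_x$ and $F_y$), since $x+y=y+x$.
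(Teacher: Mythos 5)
Your proof is correct, but it follows a genuinely different route from the paper's. The paper applies the probability integral transform: it observes that $(F_x(x_n))_{n\in\N}$ and $(F_y(y_n))_{n\in\N}$ are uniformly distributed mod $1$ and, by Proposition \ref{prop:map-indep}, $\mu_R$-independent; writing $x_n=G_x(F_x(x_n))$ and $y_n=G_y(F_y(y_n))$ with the inverses $G_x,G_y$, it then invokes the multivariate Weyl criterion (Theorem \ref{th:weyl}, item (4), applied to the Riemann integrable function $(\xi,\eta)\mapsto 1_{(-\infty,z]}(G_x(\xi)+G_y(\eta))$) to convert the Ces\`aro average into a genuine double integral over $[0,1]^2$, where Fubini is available, and finishes by a change of variables. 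You instead work directly at the level of the algebra of relatively measurable sets: a finite partition of the range of $y$, a sandwich of the level set $\{x_n+y_n\le z\}$ between two disjoint unions of ``rectangles'' $x^{-1}(I)\cap y^{-1}(J)$, evaluation of these via independence and finite additivity, and a squeeze between upper and lower Riemann--Stieltjes sums that collapse by uniform continuity of $F_x$. The paper's argument is shorter because it reuses machinery already built (Weyl's theorem and the mapping proposition), but it genuinely needs $F_x,F_y$ continuous \emph{and} increasing in order to form $G_x,G_y$, and one must separately check Riemann integrability of the indicator on $[0,1]^2$. Your argument is self-contained, never constructs anything resembling a joint measure, uses strict monotonicity nowhere and continuity essentially only of the integrand $F_x$, and yields the relative measurability of the level sets of $x+y$ explicitly along the way; it is in effect a slightly more general proof. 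Both obtain the symmetric representation by exchanging the roles of $x$ and $y$.
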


\begin{proof}
It is comparably easy to see that the sequences $(F_x(x_n))_{n\in \N}$ and 
$(F_y(y_n))_{n\in \N}$ are uniformly distributed mod 1. 
Proposition \ref{prop:map-indep} implies that they are $\mu_R$-independent. 
Observe that the restriction of $F_x$ to 
the closure of $\{t\in\R\colon F_x(t)\in(0,1)\}$ is continuous and increasing
and therefore has an inverse, which we denote by $G_x$.  
Denote by $G_y$  the corresponding inverse function of $F_y$. We have
\begin{align*}
\mu_R(x+y\le z)
&=\lim_{N\to \infty}\sum_{n=1}^N1_{(-\infty,z]}(x_n+y_n)
=\lim_{N\to \infty}\sum_{n=1}^N1_{(-\infty,z]}\Big(G_x\big(F_x(x_n)\big)+G_y\big(F_y(y_n)\big)\Big)\\
&\stackrel{(*)}{=}\int_{[0,1]^2} 1_{(-\infty,z]}\big(G_x(\xi)+G_y(\eta)\big)\dint \xi\,\dint \eta
=\int_{\R^2} 1_{(-\infty,z]}(\xi+\eta)\dint F_x(\xi)\dint F_y(\eta)\\
&=\int_{-\infty}^\infty\int_{-\infty}^{z-\eta} \dint F_x(\xi)\dint F_y(\eta)
=\int_{-\infty}^\infty F_x(z-\eta)\dint F_y(\eta)\,,
\end{align*}
where we have used in $(*)$ that $(F_x(x_n))_{n\in \N}$ and 
$(F_y(y_n))_{n\in \N}$ are uniformly distributed mod 1 and independent. 
\end{proof}
If we consider, for instance, the sequence $x=\big(\cos(2\pi \alpha n)\big)_{n\in\N}$ with
irrational $\alpha$, then, since $(\alpha n)_{n\in\N}$ is uniformly distributed mod 1, 
\begin{align*}
F_x(z)&=\mu_R(x\le z)
=\lim_{N->\infty}\frac{1}{N}\sum_{n=1}^N 1_{(-\infty,z]}\big(\cos(2\pi \alpha n)\big)
=\int_0^11_{(-\infty,z]}\big(\cos(2\pi \xi)\big)\dint \xi\\
&=2\int_0^\frac{1}{2}1_{(-\infty,z]}\big(\cos(2\pi \xi)\big)\dint \xi
=\frac{1}{\pi}\int_{1}^{-1}1_{(-\infty,z]}(\eta)\arccos'(\eta)\dint \eta\\
&=\frac{1}{\pi}\int_{-1}^{1}1_{(-\infty,z]}(\eta)\arcsin'(\eta)\dint \eta
=1_{[-1,1]}(z)\frac{1}{\pi}\arcsin(z)+1_{(1,\infty)}(z)\,.
\end{align*}
This means that the distribution function of the sequence 
$\Big(\cos(2\pi \alpha_1 n)+\ldots+\cos(2\pi \alpha_m n)\Big)_{n\in\N}$
is given by $F_x^{*m}$. Therefore, we obtain a central limit theorem for partial sums of cosines with linearly independent frequencies, i.e., 
with $1,\alpha_1,\alpha_2,\dots$ linearly independent over $\Q$,
\[
\lim_{m\to\infty}\mu_R\bigg(\bigg\{n\in\N\colon a\le \frac{\cos(2\pi \alpha_1 n)+\ldots+\cos(2\pi \alpha_m n)}{\sqrt{m/2}}\le b \bigg\}\bigg)=\frac{1}{\sqrt{2\pi}}\int_a^b e^{-\frac{\xi^2}{2}}\dint\xi\,.
\]

% % % % % % % % % % % % % % % % % % % % % % % % % % % % % %
\subsection{Relatively measurable subsets of $(0,\infty)$ --  the continuous setting}\label{subsec: continuous setting}
% % % % % % % % % % % % % % % % % % % % % % % % % % % % % %
The deliberations of the previous subsection can quite effortlessly be lifted to a continuous setting. A continuous version of a relative measure on Lebesgue measurable subsets of $\R$ can be defined as the limit 
%sieht dann 
%wenn für jedes $T\in (0,\infty)$ 
%die Indikatorfunktion von $A\cap [-T,T]$ Lebesgue-messbar ist und der Grenzwert 
\[
\mu_R(A):= \lim_{T\to\infty} \frac{1}{T}\int_{0}^T \ind_{A}(x)\,\dint x
\]
if it exists. 
%In diesem Fall definieren wir
%das {\em relative Maß} von $A$ als diesen Grenzwert,
%\[
%\mu_R(A):=
%\lim_{T\to\infty} \frac{1}{2T}\int_{-T}^T {\ind}_{A}(x)\,\dint x\,.
%\] 
In analogy to the case of sequences, one obtains a continuous version of Weyl's
theorem (see also \cite[Chapter 9]{KN1974}) and thus the independence of
functions of uniformly distributed functions. An example is again given by the
cosines with linearly independent frequencies (cf. \cite{KacSteinhaus1938}),
i.e., if $1,\alpha_1,\alpha_2,\dots$ are linearly independent over $\Q$, then for
all $m\in\N$ and all $s_1,\dots,s_m\in\R$, \begin{align*}
& \mu_R\Big( \Big\{ t\in(0,\infty)\,:\,\cos(2\pi \alpha_1 t) \leq s_1, \cdots, \cos(2\pi \alpha_m t) \leq s_m \Big\}\Big) \cr
&\qquad\qquad= \prod_{j=1}^m\mu_R\Big(\big\{t\in(0,\infty)\,:\,\cos(2\pi \alpha_j t) \leq s_j \big\}\Big).
\end{align*}
Those considerations then yield a central limit theorem of the form
\[
\lim_{m\to\infty}\mu_R\left(\left\{ t\in(0,\infty)\,:\, a \leq \frac{\cos(2\pi \alpha_1 t) + \dots + \cos(2\pi \alpha_m t)}{\sqrt{m/2}} \leq b \right\} \right) = \frac{1}{\sqrt{2\pi}}\int_a^b e^{-\frac{\xi^2}{2}}\,\dint\xi.
\]
The original approach to this result is, as we find, more complicated and can be found in \cite{KacSteinhaus1938}. The latter is presented in a more accessible way in \cite[Chapter 3]{MK1959}.

% % % % % % % % % % % % % % % % % % %
\section{The Erd\H{o}s-Kac Theorem}
% % % % % % % % % % % % % % % % % % %

This section is devoted to a famous theorem of Paul Erd\H{o}s and Mark Kac. One
can say that this result marks the birth of what is today known as
probabilistic number theory. The close link between probability theory and
number theory illustrated by this theorem can hardly be overrated and turned
out to be extremely fruitful.

We shall start with the original heuristics of Mark Kac, which led him to conjecture the result he later proved together with Paul Erd\H{o}s. 

% % % % % % % % % % % % % % % % % % % % % % % % % % % % % % % % % % % % % % % % % 
\subsection{Heuristics -- Independence \& CLT}
% % % % % % % % % % % % % % % % % % % % % % % % % % % % % % % % % % % % % % % % %

A guiding idea of Mark Kac has been that if there is some sort of independence, then there is the Gaussian law of errors at play. Exactly this maxim underlies the the Erd\H{o}s-Kac theorem. The object of interest is the number of different prime factors of a given number.

Let us consider the following indicator functions. For each prime number $p$ and every $n\in\N$, we define
\[
I_p(n) = 
\begin{cases}
1 &: \text{if $p$ divides $n$}\\
0 & : \text{if $p$ does not divide $n$}.
\end{cases}
\]
Given a natural number $n\in\N$, we denote by $\omega(n)$ the number of different prime factors of $n$. The indicator functions allow us to express $\omega(n)$ as follows,
\[
\omega(n) = \sum_{p \text{ prime}} I_p(n)\,.
\]
From Subsection \ref{subsec:primfaktoren-unabh} we already know that this collection of indicator functions is $\mu_R$-independent. We now want to provide a plausibility argument, and here we follow Mark Kac's original heuristics, that suggests these indicator functions also satisfy Lindeberg's condition. In analogy to the central limit theorem of Lindeberg, this suggests that the properly normalized sum of indicator functions follows a Gaussian law of errors.
For this we note first that for all  $x\in\R$ with $x\geq 2$ we have
\begin{align}\label{eq: sum recipr. primes}
\sum_{p\text{ prime,}\atop{p \leq x}} \frac{1}{p} >  \ln \ln x  -\frac{1}{2}\,,
\end{align}
see \cite[Kapitel 3]{I1992}. As we already explained in the first part of
Subsection  \ref{subsec:primfaktoren-unabh}, essentially a fraction of $1/p$ of
the numbers is divisible by the prime $p$, i.e., we may say that a number
$n\in\N$ is divisible by $p$ with probability $1/p$. In other words, the
indicator functions $I_p(n)$ behave like Bernoulli random variables
with parameter $1/p$ and are independent. But then the expectation is $1/p$ and
the variance $1/p(1-1/p)$. What does it mean for Lindeberg's condition? Well, using
the notation
of Theorem \ref{thm:lindeberg}, we have for all $n\geq 2$ \begin{align*}
s_n & = \sqrt{\sum_{p\text{ prime}\atop p\leq n} \Var[I_p(n)] } = \sqrt{\sum_{p\text{ prime}\atop p\leq n} \frac{1}{p}\bigg(1-\frac{1}{p}\bigg)} \cr 
& \geq \frac{1}{\sqrt{2}} \sqrt{\sum_{p\text{ prime}\atop p\leq n} \frac{1}{p}} \stackrel{\eqref{eq: sum recipr. primes}}{\geq} \frac{1}{\sqrt{2}} \sqrt{\ln \ln n  -\frac{1}{2}}\,.
\end{align*}
So if $\varepsilon\in(0,\infty)$, then for sufficiently large $n\in\N$, we have
\[
\E\Big[ I_p(n)^2\,\ind_{\{|I_p(n)|>\varepsilon s_n \}}\Big] \leq \Pro\big[ I_p(n)>\varepsilon s_n \big] \leq \Pro\Big[ I_p(n) >  \frac{\varepsilon}{\sqrt{2}} \sqrt{\ln \ln n  -1/2} \,\Big] = 0.
\]
The latter holds since $I_p(n)$ only takes the values $0$ and $1$. Therefore, Lindeberg's condition in Theorem \ref{thm:lindeberg} is satisfied. Together with the independence of the indication functions $I_p(n)$, $p$ prime as well as property \eqref{eq: sum recipr. primes}, this suggests that the sequence
\[
\frac{\omega(n) - \ln\ln n}{\sqrt{\ln\ln n}},\qquad n\in\N
\]
satisfies a central limit theorem. Indeed, for every $m\in\N$ let 
$c_m=\sum_{p\; \text{prime},\,p\le m}\frac{1}{p}$
and $d_m^2=\sum_{p\; \text{prime},\,p\le m}\frac{1}{p}\big(1-\frac{1}{p}\big)$.
Further let, $\omega_m(n):=\sum_{p \text{ prime},\, p\le m} I_p(n)$.
Then for every $a,b\in\R$ with $a<b$,
\[
\lim_{m\to\infty} 
\mu_R\bigg(\Big\{n\in\N\colon a\le \frac{\omega_m(n)-c_m}{d_m}\le b\Big\}\bigg)= \frac{1}{\sqrt{2\pi}}\int_a^b e^{-x^2/2}\,\dint x\,,
\]
which appears as Lemma 1 in \cite{EK1940}. This means that
\[
\lim_{m\to\infty}\lim_{N\to\infty} \frac{1}{N}\bigg|\Big\{n\in\{1,\dots,N\}\colon a\le \frac{\omega_m(n)-c_m}{d_m}\le b\Big\}\bigg| = \frac{1}{\sqrt{2\pi}}\int_a^b e^{-x^2/2}\,\dint x\,.
\]
If one could show that the two limits may be taken simultaneously, then we would obtain
\[
\lim_{N\to\infty} \frac{1}{N}\bigg|\Big\{n\in\{1,\dots,N\}\colon a\le \frac{\omega_N(n)-c_N}{d_N}\le b\Big\}\bigg| = \frac{1}{\sqrt{2\pi}}\int_a^b e^{-x^2/2}\,\dint x\,.
\]
Together with the (proper) asymptotics for $\omega_N(n), c_N, d_N$, this would give
\[
\lim_{N\to\infty} \frac{1}{N}\bigg|\Big\{n\in\{1,\dots,N\}\colon a\le \frac{\omega(n)-\ln\ln N}{\sqrt{\ln\ln N}}\le b\Big\}\bigg| = \frac{1}{\sqrt{2\pi}}\int_a^b e^{-x^2/2}\,\dint x\,.
\]
Of course, this is merely a heuristic argument, not a proof. In any case, the heuristic and conjecture just presented leads us in the following subsection to the ingenious and famous central limit theorem of Erd{\H o}s-Kac \cite{EK1940}.

% % % % % % % % % % % % % % % % % % % % % % % % % % % % % % % % 
\subsection{The CLT of Erd{\H o}s-Kac}
% % % % % % % % % % % % % % % % % % % % % % % % % % % % % % % %

After having presented the heuristic of Mark Kac, let us tell the anecdote about the origin of the Erd{\H o}s-Kac theorem as described by Mark Kac himself in his autobiography \cite{MK1985}.
\vskip 2mm
``\emph{I knew very little number theory at the time, and I tried to find a proof along purely probabilistic lines but to no avail. In March 1939 I journeyed from Baltimore to Princeton to give a talk. Erd{\H o}s, who was spending the year at the Institute for Advanced Study, was in the audience but he half-dozed through most of my lecture; the subject matter was too far removed from his interests. Toward the end I describes briefly my difficulties with the number of prime divisors. At the mention of number theory Erd{\H o}s perked up and asked me to explain once again what the difficulty was. Within the next few minutes, even before the lecture was over, he interrupted to announce that he had the solution.}''
\vskip 2mm

%Im Jahre 1939 besuchte Kac das Institute for Advanced Study in Princeton, um dort einen Vortrag zu halten. Paul Erd{\H o}s, zu dieser Zeit f\"ur ein Jahr zu Gast in Princeton, war in diesem Vortrag anwesend. Weil jedoch das Vortragsthema zu weit von seinen Interessen abwich, d\"oste er die meiste Zeit unbeteiligt dahin. Als Kac jedoch gegen Ende des Vortrags das Problem der Primteiler aus der Zahlentheorie beschrieb und die Schwierigkeiten eines formalen Beweises darlegte, da wurde Erd{\H o}s aufmerksam und bat ihn, die Problematik noch einmal zu erl\"autern. Innerhalb von ein paar Minuten, noch bevor der Vortrag zu Ende war, unterbrach er Kac, um diesem mitzuteilen, dass er die L\"osung f\"ur das Problem gefunden habe.

When once asked about their famous result, Mark Kac replied the following (see \cite{C1986} and \cite{MK1985}): 
\vskip 2mm
``\emph{It took what looks now like a  miraculous confluence of circumstances to produce our result\dots. It would not have been enough, certainly not in 1939, to bring a  number theorist and a  probabilist together. It had to be Erd{\H o}s and me: Erd{\H o}s because he was almost unique in his knowledge and understanding of the number theoretic method of Viggo Brun,... and me because I could see independence and the normal law through the eyes of Steinhaus.}'' 
\vskip 2mm

We will now formulate the central limit theorem of Erd{\H o}s and Kac.

\begin{thm}[Erd\H{o}s-Kac, 1940]\label{thm:erdos kac}
Let $a,b\in\R$ with $a<b$. Then 
\[
\lim_{N\to\infty} \frac{1}{N}\bigg|\bigg\{ n\in\{1,\dots, N\}\,:\, a \leq \frac{\omega(n) - \ln\ln N}{\sqrt{\ln\ln N}} \leq b \bigg\} \bigg| = \frac{1}{\sqrt{2\pi}}\int_a^b e^{-x^2/2}\,\dint x\,.
\] 
\end{thm}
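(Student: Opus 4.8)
My plan is to bridge the truncated central limit theorem already recorded in the heuristic above (Lemma~1 of \cite{EK1940}) to the full statement by letting the truncation level grow with $N$, and to justify this ``simultaneous limit'' --- the one gap flagged above --- by a tail estimate together with a Slutsky-type argument. Since the limiting law is standard Gaussian, whose distribution function is continuous, it suffices to prove convergence in distribution, with respect to the normalized counting on $\{1,\dots,N\}$, of
\[
\xi_N(n):=\frac{\omega(n)-\ln\ln N}{\sqrt{\ln\ln N}}\,.
\]

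First I would fix a truncation level $y=y(N)$ growing slowly enough that $y^k=o(N)$ for every fixed $k$, yet fast enough that $\ln\ln y\sim\ln\ln N$; the choice $y=N^{1/\ln\ln N}$ works, since then $\ln\ln y=\ln\ln N-\ln\ln\ln N$. Writing $\omega_y(n)=\sum_{p\le y}I_p(n)$ and $R(n)=\omega(n)-\omega_y(n)$, I split $\xi_N$ into a truncated part and a tail. The tail is negligible: using $\tfrac1N\sum_{n\le N}I_p(n)=\tfrac1p+O(1/N)$ together with \eqref{eq: sum recipr. primes} and Mertens' estimate,
\[
\frac1N\sum_{n\le N}R(n)=\sum_{y<p\le N}\Big(\tfrac1p+O(\tfrac1N)\Big)=\ln\ln N-\ln\ln y+O(1)=O(\ln\ln\ln N)\,,
\]
which is $o(\sqrt{\ln\ln N})$. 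By Markov's inequality applied to this finite average, for every $\eps>0$ the proportion of $n\le N$ with $R(n)>\eps\sqrt{\ln\ln N}$ tends to $0$, so $R(n)/\sqrt{\ln\ln N}\to0$ in density.

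For the truncated part I would invoke, or reprove by the method of moments, the central limit theorem for $\omega_y$. Because all primes involved are $\le y$, the independence estimate $\tfrac1N\sum_{n\le N}I_{p_1}\!\cdots I_{p_j}(n)=\tfrac1{p_1\cdots p_j}+O(1/N)$ carries total error $O(\pi(y)^k/N)=o(1)$ for each fixed moment order $k$, so the moments of $(\omega_y-c_y)/d_y$ converge to those of the standard normal, where $c_y=\sum_{p\le y}\tfrac1p$ and $d_y^2=\sum_{p\le y}\tfrac1p(1-\tfrac1p)\sim\ln\ln N$. Since $c_y-\ln\ln N=O(\ln\ln\ln N)=o(\sqrt{\ln\ln N})$ and $d_y/\sqrt{\ln\ln N}\to1$, recentering by $\ln\ln N$ and rescaling by $\sqrt{\ln\ln N}$ leaves the limit law unchanged, so $(\omega_y(n)-\ln\ln N)/\sqrt{\ln\ln N}$ converges in distribution to the standard normal. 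A Slutsky-type argument then combines this with the vanishing tail to conclude that $\xi_N$ converges to the standard normal, which is the assertion.

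The delicate point is precisely the one isolated above: legitimizing $m=m(N)\to\infty$ inside the limit $N\to\infty$. This rests on the narrow but nonempty window for $y$ --- large enough that $\ln\ln y\sim\ln\ln N$, small enough that $y^k=o(N)$ --- and on the tail bound for primes in $(y,N]$. If one instead attacks $\omega$ directly by moments, the same tension resurfaces as the necessity to truncate the prime range before the arithmetic errors $O(\pi(N)^k/N)$ swamp the main term; either way, the quantitative control of the large-prime tail is the crux of the proof.
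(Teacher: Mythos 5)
The paper does not actually prove Theorem~\ref{thm:erdos kac}: it states explicitly that a formal proof would go beyond its scope and defers to the original Brun-sieve argument of Erd\H{o}s and Kac \cite{EK1940} and to the proof of R\'enyi and Tur\'an \cite{RT1958}. So there is no in-paper argument to compare against; what can be assessed is whether your route closes the gap the paper itself flags, namely the interchange of the limits $m\to\infty$ and $N\to\infty$.

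Your outline does close it, and it is the standard modern (moment-method) proof rather than the sieve-based original: truncate at $y=N^{1/\ln\ln N}$, kill the contribution of the primes in $(y,N]$ by Mertens plus Markov (here it matters that $R(n)\ge 0$, so the first-moment bound suffices), prove the CLT for $\omega_y$ by moments using that the arithmetic error in each mixed moment is $O(1/N)$ and that there are only $O(\pi(y)^k)=o(N)$ terms, and finish with recentering and a Slutsky argument, using continuity of $\Phi$ to pass from convergence in distribution to convergence of the interval counts. The choice of $y$ is exactly in the admissible window ($\ln\ln y\sim\ln\ln N$ yet $y^k=o(N)$), which is the whole point. Two places deserve one more sentence each in a full write-up: (i) you cannot literally ``invoke'' Lemma~1 of \cite{EK1940} as quoted in the heuristic, since that statement is an iterated limit with $m$ fixed first; you need the quantitative moment comparison you sketch, together with the standard fact that the $k$-th moment of a normalized sum of independent, uniformly bounded, centered variables tends to the Gaussian $k$-th moment once $d_y\to\infty$ (this is where Lindeberg or a direct pairing count enters); (ii) the error bookkeeping should account for the $2^k$ sub-terms produced by expanding $\prod_i\bigl(I_{p_i}-\tfrac1{p_i}\bigr)$, which is harmless for fixed $k$ but worth stating. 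With those details supplied, the argument is complete; compared with the sieve proof it is more elementary and self-contained, at the price of being purely qualitative (no error term in $N$).
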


In other words, for large $N\in\N$ the proportion of natural numbers in the set $\{1,\dots,N\}$ for which the suitably normalized number of different prime factors is between $a$ and $b$ is close to a Gaussian integral from $a$ to $b$. In short: the number of prime factors of a large, suitably normalized number follow a Gaussian curve.

Providing a formal proof for Theorem \ref{thm:erdos kac} would go beyond the scope of this paper. The original argument of Erd{\H o}s and Kac use number theoretic methods of sieve theory (more precisely Brun's sieve). Another proof is due to Alfr\'ed R\'enyi (20. March 1921 in Budapest; 1. February 1970 Budapest) and P\'al Tur\'an (18. August 1910 in Budapest; 26. September 1976 Budapest) and can be found in \cite{RT1958}. Let us mention that Godfrey Harold Hardy (7. February 1877 in Cranleigh; 1. December 1947 in Cambridge) and Srinivasa Ramanujan (22. December 1887 in Erode; 26. April 1920 in Kumbakonam) prove in their paper \cite{HR1917} from 1917 that for all $\varepsilon\in(0,\infty)$
\[
\lim_{N\to\infty} \frac{1}{N}\bigg|\bigg\{ n\in\{1,\dots, N\}\,:\, \Big| \frac{\omega(n)}{\ln\ln N} -1\Big| \geq \varepsilon \bigg\} \bigg| = 0\,.
\] 
This means that for large $N\in\N$ if we pick a number $n\in\{1,\dots,N\}$ at random (with respect to the uniform distribution), then the number $\omega(n)$ of different prime factors is of order $\ln \ln N$.

\begin{rem}
Even though P\'al Tur\'an already noticed that the result of Hardy and Ramanujan can be obtained from an inequality for the second moments of $\omega(n)$ together with an application of Chebychev's inequality \cite{BD2010}, one can say that the Erd{\H o}s-Kac Theorem marks the beginning of probabilistic number theory. Also the work \cite{EW1939} of Paul Erd{\H o}s and Aurel Wintner (8. April 1903 in Budapest; 15. January 1958 in Baltimore) has been one of the pioneering contributions to this complex of problems.       
\end{rem}

We close this section with the statement of a corollary that gives a different
version of the Erd{\H o}s-Kac theorem, in which $N$ in the $\log\log$ terms
is replaced by $n$, which looks more natural in our setup, because it 
directly states that the distribution function of the sequence 
$\big(\frac{\omega(n) - \ln\ln n}{\sqrt{\ln\ln n}}\Big)_{n\in \N}$
 is that of the standard normal one.

\begin{cor} 
Let $a,b\in\R$ with $a<b$. Then
\[
\lim_{N\to\infty} \frac{1}{N}\bigg|\bigg\{ n\in\{1,\dots, N\}\,:\, a \leq \frac{\omega(n) - \ln\ln n}{\sqrt{\ln\ln n}} \leq b \bigg\} \bigg| = \frac{1}{\sqrt{2\pi}}\int_a^b e^{-x^2/2}\,\dint x\,.
\] 
\end{cor}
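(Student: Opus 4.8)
The plan is to derive the corollary from the Erd\H{o}s-Kac theorem (Theorem \ref{thm:erdos kac}) by showing that replacing $\ln\ln N$ with $\ln\ln n$ in the normalization does not change the limiting distribution. The key observation is that for $n\in\{1,\dots,N\}$ the two normalizations differ only for $n$ in a range that is asymptotically negligible in relative measure. More precisely, for any fixed $\delta\in(0,1)$, the numbers $n\in\{1,\dots,N\}$ with $n\ge N^\delta$ constitute a proportion tending to $1-\delta$ (indeed $|\{n\le N : n<N^\delta\}|/N = N^{\delta-1}\to 0$ as $N\to\infty$ for fixed $\delta<1$), so in fact all but a vanishing proportion of $n\le N$ satisfy $n\ge N^\delta$. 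For such $n$ one has $\delta\ln\ln N \le \ln\ln n \le \ln\ln N$ up to a negligible additive correction, and hence $\ln\ln n = \ln\ln N + o(\sqrt{\ln\ln N})$ once $\delta$ is taken close to $1$.

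First I would fix $\varepsilon>0$ and $a<b$, and abbreviate $\Phi(b):=\frac{1}{\sqrt{2\pi}}\int_{-\infty}^b e^{-x^2/2}\,\dint x$. The goal is to trap the counting fraction in the corollary between two quantities that both converge to $\Phi(b)-\Phi(a)$. Second, I would discard the negligible initial segment: since $|\{n\le N : n< N/(\ln N)\}|/N \to 0$, I may restrict attention to $n$ with $N/(\ln N)\le n\le N$, for which $\ln\ln n = \ln\ln N + o(\sqrt{\ln\ln N})$. Indeed, $\ln\ln n \ge \ln(\ln N - \ln\ln N)$, and a direct estimate gives $|\ln\ln n - \ln\ln N|\to 0$ uniformly over this range, which is far smaller than $\sqrt{\ln\ln N}$. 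Third, using this uniform closeness I would compare the two normalizations: the event $a\le \frac{\omega(n)-\ln\ln n}{\sqrt{\ln\ln n}}\le b$ is sandwiched between events of the form $a'\le \frac{\omega(n)-\ln\ln N}{\sqrt{\ln\ln N}}\le b'$ with $a',b'\to a,b$ as $N\to\infty$, because both the centering shift and the scale ratio converge to the benign values $0$ and $1$ respectively.

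The final step is to invoke Theorem \ref{thm:erdos kac} for the perturbed endpoints $a',b'$ together with the continuity of $\Phi$: since $a'\to a$ and $b'\to b$, the limit of the $N$-normalized counting fractions equals $\Phi(b')-\Phi(a')\to \Phi(b)-\Phi(a)$. A clean way to organize this is through Lemma \ref{lem:uniform-conv}: the distribution functions $F_N(b):=\frac{1}{N}|\{n\le N : \frac{\omega(n)-\ln\ln N}{\sqrt{\ln\ln N}}\le b\}|$ converge pointwise to the continuous limit $\Phi$, hence uniformly, so that evaluating at the slowly varying argument $b'$ rather than $b$ incurs only an error tending to zero uniformly.

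The main obstacle is controlling the interplay between the shift in centering and the change in scale simultaneously, since both $\ln\ln n$ appearing in the numerator and in the denominator are perturbed. The safest route is to bound the perturbed scale ratio $\sqrt{\ln\ln n/\ln\ln N}$ between $1-o(1)$ and $1$, handle the numerator shift $|\ln\ln n - \ln\ln N|/\sqrt{\ln\ln N}\to 0$ separately, and only then combine them; the uniformity provided by Lemma \ref{lem:uniform-conv} is exactly what converts these two small perturbations into a harmless perturbation of the endpoints without any loss of rigor.
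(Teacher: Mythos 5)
Your proposal is correct and follows essentially the same route as the paper: reduce to Theorem \ref{thm:erdos kac} by discarding a negligible initial segment of $\{1,\dots,N\}$, noting that on the remainder $\ln\ln n$ and $\ln\ln N$ differ by an amount that perturbs the endpoints only slightly, and using the uniform convergence supplied by Lemma \ref{lem:uniform-conv} to absorb that perturbation into the continuous limit $\Phi$. The only (harmless) difference is in the bookkeeping: you cut at $n\ge N/\ln N$, which makes $\ln\ln N-\ln\ln n$ tend to zero uniformly and lets you sandwich both endpoints symmetrically, whereas the paper cuts at roughly $N^{e^{-K/2}}$, tolerates a constant discrepancy $K$ in the centering, and treats the upper bound by monotonicity and only the lower bound in detail.
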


\begin{proof}
Clearly, for every $b\in\R$, we have 
\begin{align*}
\lefteqn{\limsup_{N\to\infty} \frac{1}{N}\bigg|\bigg\{ n\in\{1,\dots, N\}\,:\,  \frac{\omega(n) - \ln\ln n}{\sqrt{\ln\ln n}} \leq b \bigg\} \bigg| }\\
&\le\lim_{N\to\infty} \frac{1}{N}\bigg|\bigg\{ n\in\{1,\dots, N\}\,:\,  \frac{\omega(n) - \ln\ln N}{\sqrt{\ln\ln N}} \leq b \bigg\} \bigg| = \Phi(b)\,,
\end{align*}
where $\Phi(t)=\frac{1}{\sqrt{2\pi}}\int_{-\infty}^t
e^{-x^2/2}\,\dint x$ for all $t\in\R$ as before.
First note that, by Theorem \ref{thm:erdos kac}, the distribution functions $F_N$ with 
$F_N(t):=\tfrac{1}{N}|\{1\le n\le N\colon \omega(n)\le t\sqrt{\ln\ln N}+\ln\ln
N\}|$ converge pointwise to $\Phi$, and therefore also uniformly on $\R$, by 
Lemma \ref{lem:uniform-conv}. 

Now fix $b\in\R$ and let $K\in (0,\infty)$ be such that 
$e^{-\frac{K}{2}}<\tfrac{\varepsilon}{3}$.
Let $N_0\in\N$ be such that for all $N\ge N_0$
and all $t\in\R$ we have $F_N(t)\in(\Phi(t)-\tfrac{\varepsilon}{3},\Phi(t)+\tfrac{\varepsilon}{3})$, $\Phi\big(b-\tfrac{K}{\ln\ln N}\big)>\Phi(b)-\tfrac{\varepsilon}{3}$, $\sqrt{\ln\ln N}>b$, and $\ln \ln N>0$. 
With this 
\begin{align*}
\tfrac{1}{N}\big|\big\{ n\in\{1,\dots, N\}\colon \omega(n) \le b \sqrt{\ln\ln N}+\ln\ln N - K \big\} \big|\ge \Phi\big(b-\tfrac{K}{\ln\ln N}\big)-\tfrac{\varepsilon}{3}>\Phi\big(b\big)-\tfrac{2\varepsilon}{3}\,.
\end{align*}
If we denote $N_1:=\sup\{n\in\N\colon b \sqrt{\ln\ln N}+\ln\ln N - K > b \sqrt{\ln\ln n}+\ln\ln n\}$, then 
%\gu{is $n\mapsto b \sqrt{\ln\ln n}+\ln\ln n$ non-decreasing?}
\begin{align*}
\lefteqn{\tfrac{1}{N}\big|\big\{ n\in\{1,\dots, N\}\colon \omega(n) \le b \sqrt{\ln\ln n}+\ln\ln n \big\} \big|}\\
&\ge \tfrac{1}{N}\big|\big\{ n\in\{N_1+1,\dots, N\}\colon \omega(n) \le b \sqrt{\ln\ln n}+\ln\ln n \big\} \big|\\
&\ge\tfrac{1}{N}\big|\big\{ n\in\{N_1+1,\dots, N\}\colon \omega(n) \le b \sqrt{\ln\ln N}+\ln\ln N - K \big\} \big|\\
&\ge\tfrac{1}{N}\big|\big\{ n\in\{1,\dots, N\}\colon \omega(n) \le b \sqrt{\ln\ln N}+\ln\ln N - K \big\} \big|-\tfrac{N_1}{N}\\
&> \Phi(b)-\tfrac{2\varepsilon}{3}-\tfrac{N_1}{N}\,.
\end{align*}
Now, let us compare $N$ and $N_1$. We observe that if
\[
b (\sqrt{\ln\ln N}-\sqrt{\ln\ln N_1})+\ln\ln N  -\ln\ln N_1 > K\,,
\]
then
\[
(\sqrt{\ln\ln N}+\sqrt{\ln\ln N_1})(\sqrt{\ln\ln N}-\sqrt{\ln\ln N_1})+\ln\ln N  -\ln\ln N_1 > K\,,
\]
which implies that
\[
2(\ln\ln N  -\ln\ln N_1) > K\,.
\]
Hence, we have
\[
\ln\ln N  - \tfrac{K}{2}>\ln\ln N_1
\]
and so $N^{e^{-\frac{K}{2}}}  > N_1$. Therefore, 
\begin{align*}
\frac{N_1}{N}<N^{e^{-\frac{K}{2}}-1}<N^{-K/2}<e^{-K/2}<\tfrac{\varepsilon}{3}\,,
\end{align*}
which completes the proof.
\end{proof}

A similar calculation shows that the two formulations of the Erd{\H o}s-Kac
theorem are actually equivalent.

% % % % % % % % % % % % % % % % % % % % % % % % % % % % % % % % %
\section{Some complementary considerations --- The case of lacunary series}
% % % % % % % % % % % % % % % % % % % % % % % % % % % % % % % % %

What we have seen so far shows the power of the concept of relative measure in number theory and how it can naturally (in large parts along the lines of classical probability theory) lead us to central limit theorems for number theoretic quantities, even where the axiomatic framework of Kolmogorov is not applicable. On the other hand, we have seen, when studying binary expansions, that Kolmogorov's theory is a powerful tool as well and allows us to obtain information about the Gaussian fluctuations of number theoretic quantities. A common spirit of both, and eventually a key to a Gaussian law, has always been a notion of independence. 

In what follows, we complement the previous considerations by showing that lacunary series, for instance those that are formed with functions $\cos(2\pi n_k \cdot):[0,1]\to\R$ and quickly increasing gap sequence $(n_k)_{k\in\N}$, behave in many ways like \emph{independent} random variables, and that this almost-independence or weak form of independence may still lead to fascinating results within the axiomatic theory of Kolmogorov.

Already in Subsection \ref{sec:dyadisch} on binary expansions we noted that Hans Rademacher introduced in \cite{Rademacher1922} what is known today as Rademacher functions. Those functions are defined in the following way,
\[
r_k(t) = \sign \sin(2^k\pi t),\qquad t\in[0,1],\, k\in\N\,,
\]
where for $x\in\R$,
\[
\sign(x) := 
\begin{cases}
-1 & : x<0\\
0 & :x=0\\
+1 & :x>0.
\end{cases}
\]
Rademacher studied the convergence behavior of series
\begin{align}\label{eq:rademacher series}
\sum_{k=1}^\infty a_kr_k(t),\qquad t\in[0,1],\,(a_k)_{k=1}^\infty \in\R^{\N}\,,
\end{align}
and proved that such series converge for almost all $t\in[0,1]$ if
\begin{align}\label{eq:a_k square summable}
\sum_{k=1}^\infty a_k^2 &< +\infty\,.
\end{align}
The necessity of square integrability was obtained by Alexander Khintchine (19. July 1894 in Kondyrjowo; 18. November 1959 in Moscow) and Andrei Kolmogorov in their 1925 paper \cite{KK1925}, showing that if
\begin{align}\label{eq:a_k not square summable}
\sum_{k=1}^\infty a_k^2 = +\infty,
\end{align}
then the series \eqref{eq:rademacher series} diverges for almost all $t\in[0,1]$.

Starting in the 1920s, Stefan Banach (30. March 1892 in Krakow; 31. August 1945 in Lviv), Andrei Kolmogorov, Raymond Paley (7. January 1907 in Bournemouth; 7. April 1933 near Banff), Antoni Zygmund (25. December 1900 in Warsaw; 30. May 1992 in Chicago) and others studied the convergence behavior of trigonometric series
\begin{align}\label{eq:lacunary series}
\sum_{k=1}^\infty a_k \cos(2\pi n_k t),\qquad t\in[0,1],\,(a_k)_{k=1}^\infty\in\R^{\N}\,,
\end{align}
where the sequence $(n_k)_{k=1}^\infty$ satisfies the Hadamard gap condition
\[
\frac{n_{k+1}}{n_k}>q>1
\]
for all $k\in\N$ (see \cite{B1930, K1924, PZ1930, Z1930}). For such series one can obtain results similar to those for Rademacher series \eqref{eq:rademacher series}. Kolmogorov could prove in \cite{K1924} that the square summability condition \eqref{eq:a_k square summable} is also sufficient for almost everywhere convergence of lacunary series. The necessity of \eqref{eq:a_k square summable} has been shown by Zygmund in \cite{Z1930}.

An important analogy between Rademacher series and lacunary series, in particular in view of our article, remained unnoticed for a long time. In Subsection \ref{sec:dyadisch} we proved that the Rademacher functions (more precisely a version of them) are independent. In particular, given any sequence $(a_k)_{k=1}^\infty$ of real numbers, the functions $a_kr_k$, $k\in\N$ are independent (but no longer identically distributed), and we have for all $k\in\N$ that
\[
\E[a_kr_k]= 0 \qquad\text{and}\qquad \Var[a_kr_k] = a_k^2\,.
\]
Using the notation from Lindeberg's theorem (see Theorem \ref{thm:lindeberg}), we see that
\[
s_n^2 = \sum_{k=1}^n \Var[a_kr_k] = \sum_{k=1}^n a_k^2\,.
\]  
But this means that for $\varepsilon\in(0,\infty)$, Lindeberg's condition for the weighted Rademacher functions reads as follows,
\[
\frac{1}{\sum\limits_{k=1}^n a_k^2} \,\sum_{k=1}^n \E\Bigg[ (a_kr_k)^2 \ind_{\Big\{|a_kr_k| \geq \varepsilon \sqrt{\sum_{k=1}^n a_k^2} \Big\}} \Bigg]
= \frac{1}{\sum\limits_{k=1}^n a_k^2} \,\sum_{k=1}^n a_k^2\, \Pro\Bigg[ |a_k| \geq \varepsilon \sqrt{\sum_{k=1}^n a_k^2}\, \Bigg]\,.
\]
For Lindeberg's condition to be satisfied, we require the right-hand side to converge to $0$ as $n\to\infty$. A moment's thought, however, reveals that this is the case whenever
\begin{eqnarray}\label{eq:Lindeberg requirements a_k}
\sum_{k=1}^\infty a_k^2 = +\infty &\qquad\text{and}\qquad& \max_{1\leq k \leq n} |a_k| = o\Bigg( \sqrt{\sum_{k=1}^n a_k^2}\Bigg)\,.
\end{eqnarray}
Therefore, under condition \eqref{eq:Lindeberg requirements a_k}, we obtain that, for all $t\in\R$,
\[
\lim_{n\to\infty} \lambda\Bigg( \bigg\{x\in[0,1]\,:\, \sum_{k=1}^n a_kr_k(x) \leq t \sqrt{\sum_{k=1}^na_k^2}\,  \bigg\} \Bigg) = \frac{1}{\sqrt{2\pi}}\int_{-\infty}^{t} e^{-\frac{y^2}{2}}\,\dint y\,.
\]
It was not before 1947 that Rapha\"el Salem (7. November 1898 in Saloniki; 20. June 1963 in Paris) and Antoni Zygmund proved in \cite{SZ1947} that for Hadamard gap sequences the functions $\big(\cos(2\pi n_k \cdot)\big)_{k\in\N}$ follow a central limit theorem, i.e., for all $t\in\R$,
\[
\lim_{N\to\infty} \lambda\bigg(\Big\{ x\in(0,1) \,:\, \sum_{k=1}^N \cos(2\pi n_k x) \leq t \sqrt{N/2}\Big\} \bigg) = \frac{1}{\sqrt{2\pi}}\int_{-\infty}^{t} e^{-\frac{y^2}{2}}\,\dint y\,.
\]
For sequences with very large gaps, i.e., those satisfying the stronger condition
\[
\frac{n_{k+1}}{n_k} \stackrel{n\to\infty}{\longrightarrow} +\infty\,,
\] 
such a central limit theorem had been obtained in 1939 by Mark Kac in \cite{K1939}.

Around the same time as Salem and Zygmund, Mark Kac \cite{K1946} (see also \cite{K1949, MK1959} and the references therein) obtained a central limit theorem for functions $f:\R\to\R$ of bounded variation on $[0,1]$ satisfying
\[
f(t+1) = f(t) \qquad\text{and}\qquad \int_0^1f(t) \,\dint t = 0\,.
\]
He showed that for such functions
\[
\lim_{N\to\infty} \lambda\bigg(\Big\{ x\in(0,1) \,:\, \sum_{k=1}^N f(2^kx) \leq t \sigma \sqrt{N} \Big\} \bigg) = \frac{1}{\sqrt{2\pi}}\int_{-\infty}^{t} e^{-\frac{y^2}{2}}\,\dint y\,
\]
whenever
\begin{align}\label{eq:sigma^2}
\sigma^2 := \int_0^1 f(t)^2\,\dint t + 2 \sum_{k=1}^\infty \int_0^1 f(t) f(2^kt)\,\dint t \neq 0\,. 
\end{align}
This already indicates that the functions $f(2^k\cdot)$, $k\in\N$ do not behave like independent random variables. In fact, in that case we would expect something like
\[
\sigma^2 = \int_0^1 f(t)^2 \,\dint t \neq 0 
\] 
rather than condition \eqref{eq:sigma^2}. After further progress had been made by Gapo\v{s}kin \cite{G1966} and Takahashi \cite{T1961}, Gapo\v{s}kin eventually discovered a deep connection between the validity of a central limit theorem and the number of solutions of a certain Diophantine equation \cite{G1970}, i.e., whether a central limit theorem holds or not depends not only on the growth rate of the sequence $(n_k)_{k\in\N}$, but also critically on its number theoretic properties. In 2010 Christoph Aistleitner and Istv\'an Berkes presented a paper in which they obtained both necessary and sufficient conditions under which a sequence $f(n_k \cdot)_{k\in\N}$ follows a Gaussian law of errors \cite{AB2010}.

Please note that the preceding paragraph is not intended to be exhaustive. Still it indicates the development of the subject, highlights some fascinating results, and shows how analytic, probabilistic, and number theoretic arguments and properties intertwine.

\begin{rem}
The results presented in this final section are not restricted to central limit phenomena. Beyond the normal fluctuations one can also prove laws of the iterated logarithm for lacunary series and we refer the reader to the work of Erd{\H o}s and G\'{a}l \cite{erdosgal1955}, Aistleitner and Fukuyama \cite{AF2012, AF2016},  Aistleitner, Berkes, and Tichy \cite{ABT2012a, ABT2012}, and the references cited therein.
\end{rem}
\subsection*{Acknowledgment}
GL is supported by the Austrian Science Fund (FWF) Project F5508-N26, which is part of the Special Research Program ``Quasi-Monte Carlo Methods: Theory and Applications''. JP is supported by the Austrian Science Fund (FWF) Project P32405 ``Asymptotic Geometric Analysis and Applications'' as well as a visiting professorship from Ruhr University Bochum and its Research School PLUS.

\bibliographystyle{plain}
%\bibliography{mg}

\end{document}